\definecolor{rouge}{rgb}{0.7,0.00,0.00}
\definecolor{vert}{rgb}{0.00,0.5,0.00}
\definecolor{bleu}{rgb}{0.00,0.00,0.8}
\newtheorem{theorem}{Theorem}[section]
\newtheorem*{theorem*}{Theorem}
\newtheorem{lemma}[theorem]{Lemma}
\newtheorem{definition}[theorem]{Definition}
\newtheorem{proposition}[theorem]{Proposition}
\newtheorem{condition}{Condition}
\newtheorem{conditionA}{A\kern-0.1mm}
\newtheorem{conditionB}{B\kern-0.1mm}
\theoremstyle{definition}
\newtheorem{remark}[theorem]{Remark}
\def \eref#1{\hbox{(\ref{#1})}}
\numberwithin{equation}{section}
\def\geq{\geqslant}
\def\leq{\leqslant}
\def\RR{\mathbb{R}}
\def\PP{\mathbb{P}}
\def\EE{\mathbb{E}}
\def\NN{\mathbb{N}}
\def\vare{{\varepsilon}}
\def \eref#1{\hbox{(\ref{#1})}}
\def\EE{\mathbb{ E}}
\begin{document}

\title[Strong averaging principle for  SPDEs with locally monotone coefficients]
{Strong averaging principle for slow-fast stochastic partial differential equations with locally monotone coefficients}

\author{Wei Liu}
\curraddr[Liu, W.]{ School of Mathematics and Statistics, Jiangsu Normal University, Xuzhou, 221116, China}
\email{weiliu@jsnu.edu.cn}

\author{Michael R\"{o}ckner}
\curraddr[R\"{o}ckner, M.]{Fakult\"{a}t f\"{u}r Mathematik, Universit\"{a}t Bielefeld, D-33501 Bielefeld, Germany, and Academy of Mathematics and Systems Science,
  Chinese Academy of Sciences (CAS), Beijing, 100190, China}
\email{roeckner@math.uni-bielefeld.de}

\author{Xiaobin Sun}
\curraddr[Sun, X.]{ School of Mathematics and Statistics, Jiangsu Normal University, Xuzhou, 221116, China}
\email{xbsun@jsnu.edu.cn}

\author{Yingchao Xie}
\curraddr[Xie, Y.]{ School of Mathematics and Statistics, Jiangsu Normal University, Xuzhou, 221116, China}
\email{ycxie@jsnu.edu.cn}
\begin{abstract}
This paper is devoted to proving the strong averaging principle for slow-fast stochastic partial differential equations with locally monotone coefficients, where the slow component is a stochastic partial differential equations with locally monotone coefficients and the fast component is a stochastic partial differential equations (SPDEs) with strongly monotone coefficients. The result is applicable to a large class of examples, such as the stochastic porous medium equation, the stochastic $p$-Laplace equation, the stochastic Burgers type equation and the stochastic 2D Navier-Stokes equation, which are the nonlinear stochastic partial differential equations. The main techniques are based on time discretization and the variational approach to SPDEs.
\end{abstract}
\date{\today}
\subjclass[2000]{60H15; 35Q30; 70K70}
\keywords{Local monotonicity; Averaging principle; Stochastic partial differential equations; Strong convergence; Slow-fast}

\maketitle
\section{Introduction}
For $i=1,2$, let $(H_i, \|\cdot\|_{H_i})$ be a separable Hilbert spaces with inner product $\langle\cdot,\cdot\rangle_{H_i}$ and $H^{*}_i$ its dual. Let $(V_i, \|\cdot\|_{V_i})$ be a reflexive Banach space, such that $V_i\subseteq H_i$ continuously and densely. Then for its dual space $V^{*}_i$ it follows that $H^{*}_i\subseteq V^{*}_i$ continuously and densely. Identifying $H_i$ and $H^{*}_i$ via the Riesz isomorphism we have that
$$
V_i\subseteq H_i\equiv H^{*}_i\subseteq V^{*}_i
$$
is a Gelfand triple. Let $_{V^{*}_i}\langle~, ~\rangle_{V_i}$ be the dualization between $V^{*}_i$ and $V_i$. Then it follows that
$$
_{V^{*}_i}\langle z_i, v_i\rangle_{V_i} =\langle z_i,v_i\rangle_{H_i},\quad \text{for all}~z_i\in H_i, v_i\in V_i.
$$

\vspace{0.1cm}
For $i=1,2$, let $\{W^{i}_t\}_{t\geq 0}$ be a cylindrical $\mathscr{F}_t$-Wiener process in a separable Hilbert space $(U_i, \|\cdot\|_{U_i})$ on a probability space $(\Omega,\mathscr{F},\mathbb{P})$ with natural
filtration $\mathscr{F}_{t}$.
Let $L_{2}(U_i,H_i)$ be the space of Hilbert-Schmidt operator from $U_i\to H_i$.
The norm on $L_{2}(U_i,H_i)$ is defined by
$$\|S\|^2_{L_{2}(U_i,H_i)}:=\sum_{k\in \mathbb{N}}\|S e_{i,k}\|^2_{H_i},\quad S\in L_{2}(U_i,H_i),$$
where $e_{i,k}$, $k\in \mathbb{N}$ is an orthonormal basis of $U_i$. We also assume the processes $\{W^{1}_t\}_{t\geq 0}$ and $\{W^{2}_t\}_{t\geq 0}$ are independent.

\vspace{0.1cm}
In this paper, we consider the
following abstract stochastic partial differential equations
\begin{equation}\left\{\begin{array}{l}\label{main equation}
\displaystyle
dX^{\varepsilon}_t=\left[A(X^{\varepsilon}_t)+F(X^{\varepsilon}_t, Y^{\varepsilon}_t)\right]dt
+G_1(X^{\varepsilon}_t)d W^{1}_{t},\\
\displaystyle
dY^{\varepsilon}_t=\frac{1}{\varepsilon}B(X^{\varepsilon}_t, Y^{\varepsilon}_t)dt
+\frac{1}{\sqrt{\varepsilon}}G_2(X^{\varepsilon}_t, Y^{\varepsilon}_t)d W^{2}_{t},\\
X^{\varepsilon}_0=x\in H_1, Y^{\varepsilon}_0=y\in H_2,\end{array}\right.
\end{equation}
where $\varepsilon>0$ is a small parameter describing the ratio of the time scale between the slow component $X^{\varepsilon}_t$
and the fast component $Y^{\varepsilon}_t$, and the coefficients
$$
A: V_1\to V^{*}_1;\quad F:H_1\times H_2\to H_1; \quad G_1: V_1\to L_{2}(U_1; H_1);
$$
and
$$
B: H_1\times V_2\to V^{*}_2; \quad G_2:H_1\times V_2\to L_{2}(U_2; H_2)
$$
are measurable.

\vskip0.1cm
The averaging principle has a long and rich history in multiscale models, which has wide applications in material sciences, chemistry,
fluid dynamics, biology, ecology and climate dynamics, see, e.g., \cite{BR,EW,HKW,MHR} and the references therein. Usually, a multiscale model can be described through coupled equations, which correspond to the "slow" and "fast" component, respectively. The averaging principle is  essential to describe the asymptotic behavior of the slow component, i.e., the slow component will convergence to the so-called averaged equation. Bogoliubov and Mitropolsky \cite{BM} first studied the averaging principle for  deterministic systems, which then was extended to stochastic differential equations by Khasminskii \cite{K1}.

\vskip0.1cm
Since the averaging principle for a general class of stochastic reaction-diffusion systems with two time-scales were investigated by Cerrai and Freidlin in \cite{CF}, the averaging principle for slow-fast stochastic partial differential equations has initiated further studies in the past decade, including other types of SPDEs, various ways of convergence and rates of convergence. For instance, Br\'{e}hier obtained the strong and weak orders in averaging for stochastic evolution equation of parabolic type with slow and fast time scales in \cite{B1}. Fu, Wan and Liu proved the strong averaging principle for stochastic hyperbolic-parabolic equations with slow and fast time-scales in \cite{FLL}. Cerrai and Lunardi studied the averaging principle for nonautonomous slow-fast systems of stochastic reaction-diffusion equations in \cite{CL}. For some further results on this topic, we refer to \cite{C1,FL,WR12,WRD12} and the references therein.

\vspace{0.1cm}
However, the references we mentioned above always assume that the coefficients satisfy Lipschitz conditions, and there are few results on the average principle for SPDEs with nonlinear terms. For example, stochastic reaction-diffusion equations with polynomial coefficients \cite{C2}, stochastic Burgers equation \cite{DSXZ}, stochastic two dimensional Navier-Stokes equations \cite{LSXZ},  stochastic Kuramoto-Sivashinsky equation \cite{GP}, stochastic Schr\"{o}dinger equation \cite{GP1} and stochastic Klein-Gordon equation \cite{GP2}. But all these papers consider semilinear SPDEs (i.e., for operators $A=A_1+A_2$ with $A_1$ a linear operator and $A_2$ a nonlinear perturbation), and use the mild solution approach to SPDEs exploiting the smoothing properties of the $C_0$- semigroup $e^{A_1 t}$ generated by the linear operator $A_1$ in an essential way. To the best of our knowledge, the case of the operator $A$ has no linear part hasn't been studied yet, such as the porous medium operator and the $p$-Laplace operator.

\vspace{0.1cm}
Hence, the main purpose of this paper is to prove the strong averaging principle for slow-fast stochastic partial differential equations within the (generalized) variational framework, i.e., locally monotone and strongly monotone coefficients for the slow and fast equations respectively. Our result covers a large class of examples (see \cite[Sections 4 and 5]{LR1}), especially for the case that the slow equation is a quasilinear stochastic partial differential equation, such as the stochastic porous medium equation or the stochastic $p$-Laplace equation. Our result is also applicable to the stochastic Burgers type equation and stochastic two dimensional Navier-Stokes equation, whose coefficients only satisfy the local monotonicity conditions.

\vspace{0.1cm}
The main difficulty here is how to avoid applying the techniques which only work in the case of the mild solution approach, and use the techniques from the variational approach. More precisely,  we will use the variational approach to estimate the integral of the time increment of $X_{t}^{\varepsilon}$ instead of studying the H\"{o}lder continuity of time, which is strong enough for our purpose. We will also use the variational approach to obtain some apriori estimates of the solution, which are crucial to construct a proper stopping time to deal with the nonlinear terms.

\vspace{1mm}
The rest of the paper is organized as follows. In Section \ref{Sec Main Result},
under some suitable assumptions, we formulate our main result.
Section \ref{Sec Proof of Thm1} is devoted to proving the main result. In Section \ref{Sec Exs}, we will give some examples to illustrate the wide applicability of our result. In the Appendix,  we give the detailed proof of the existence and uniqueness of  solutions to system \eref{main equation}.

\vspace{1mm}
Throughout the paper, $C$ and $C_p$  denote positive constants which may change from line to line , where $p$ is some parameter and $C_p$ is used to emphasize  that the constant only depends on the parameter  $p$.

\section{Main result} \label{Sec Main Result}

For the coefficients of the slow equation, we suppose that there exist constants $\alpha\in(1,\infty)$, $\beta\in [0, \infty)$, $\theta\in(0,\infty)$ and $C>0$ such that the following
conditions hold for all $u,v,w\in V_1$, $u_1,v_1\in H_1$ and $u_2,v_2\in H_2$:

\begin{conditionA}(Hemicontinuity)\label{A1}
The map $\lambda\to{_{V^{*}_1}}\langle  A(u+\lambda v), w\rangle_{V_1}$ is continuous on $\RR$.
\end{conditionA}

\begin{conditionA} (Local monotonicity) \label{A2}
\begin{align*}
2_{V^{*}_1}\langle A(u)-A(v), u-v\rangle_{V_1}+\|G_1(u)-G_1(v)\|^2_{L_2(U_1,H_1)}\leq \rho(v)\|u-v\|^2_{H_1}.
\end{align*}
where $\rho: V_1\to [0,\infty)$ is a measurable hemicontinuous function satisfying
\begin{eqnarray*}
|\rho(v)|\leq C\left[(1+\|v\|^{\alpha}_{V_1})(1+\|v\|^{\beta}_{H_1})\right].
\end{eqnarray*}
Furthermore,
\begin{eqnarray}
\|F(u_1,u_2)-F(v_1,v_2)\|_{H_1}\leq C(\|u_1-v_1\|_{H_1}+\|u_2-v_2\|_{H_2}). \label{LF}
\end{eqnarray}
\end{conditionA}

\begin{conditionA} (Coercivity)\label{A3}
\begin{align*}
_{V^{*}_1}\langle A(v), v\rangle_{V_1}\leq C\|v\|^2_{H_1}-\theta\|v\|^{\alpha}_{V_1}+C.
\end{align*}
\end{conditionA}

\begin{conditionA}(Growth)\label{A4}
\begin{eqnarray*}
\|A(v)\|^{\frac{\alpha}{\alpha-1}}_{V^{*}_1}\leq C(1+\|v\|^{\alpha}_{V_1})(1+\|v\|^{\beta}_{H_1})
\end{eqnarray*}
and
\begin{eqnarray*}
 \|G_1(v)\|_{L_2(U_1,H_1)}\leq C(1+\|v\|_{H_1}).
\end{eqnarray*}
\end{conditionA}

For the coefficients of the fast equation, we suppose that there exist constants $\kappa\in (1,\infty)$, $\gamma,\eta\in (0, \infty)$, $\zeta\in(0,1)$ and $C>0$ such that the following
conditions hold for all $u,v,w\in V_2$, $u_1,v_1\in H_1$:

\begin{conditionB}(Hemicontinuity)\label{B1}
The map $\lambda\to _{V^{*}_2}\langle B(u+\lambda v), w\rangle_{V_2}$ is continuous on $\RR$.
\end{conditionB}

\begin{conditionB} (Strong monotonicity) \label{B2}
\begin{eqnarray}
&&2_{V^{*}_2}\langle B(u_1,u)-B(u_1,v), u-v\rangle_{V_2}+\|G_2(u_1,u)-G_2(u_1,v)\|^2_{L_2(U_2,H_2)}\nonumber\\
\leq\!\!\!\!\!\!\!\!&& -\gamma\|u-v\|^2_{H_2}.\label{dc}
\end{eqnarray}
Furthermore,
\begin{eqnarray}
_{V^{*}_2}\langle B(u_1,u)-B(v_1,u), v\rangle_{V_2}\leq C\|u_1-v_1\|_{H_1} \|v\|_{H_2}\label{LPB}
\end{eqnarray}
and
$$
\|G_2(u_1,u)-G_2(v_1,v)\|_{L_2(U_2,H_2)}\leq C(\|u_1-v_1\|_{H_1}+\|u-v\|_{H_2}).
$$
\end{conditionB}

\begin{conditionB} (Coercivity)\label{B3}
\begin{align*}
_{V^{*}_2}\langle B(u_1,v), v\rangle_{V_2}\leq C\|v\|^2_{H_2}-\eta\|v\|^{\kappa}_{V_2}+C(1+\|u_1\|^2_{H_1}).
\end{align*}
\end{conditionB}

\begin{conditionB}(Growth)\label{B4}
\begin{eqnarray*}
\|B(u_1,v)\|_{V^{*}_2}\leq C(1+\|v\|^{\kappa-1}_{V_2}+\|u_1\|^{\frac{2(\kappa-1)}{\kappa}}_{H_1})
\end{eqnarray*}
and
\begin{eqnarray}
 \|G_2(u_1,v)\|_{L_2(U_2,H_2)}\leq C(1+\|u_1\|_{H_1}+\|v\|^{\zeta}_{H_2}).\label{G2}
\end{eqnarray}
\end{conditionB}

\begin{remark} We here give some comments for the assumptions above.
 \begin{itemize}
\item{ Condition \eref{dc} is also called the dissipativity condition, which guarantees that there exists a unique invariant measure for the frozen equation and the exponential ergodicity holds.}

\item{ The following is a simple example of $B$, which satisfies condition \eref{LPB},
$$
B(u,v)=B_1(v)+B_2(u,v), \text{for all}~u\in H_1, v\in V_2
$$
where $B_1: V_2\to V^{*}_2, B_2:H_1\times V_2\to H_2$ and for all $u_1,v_1\in H_1,u_2,v_2\in V_2$,
$$
\|B_2(u_1,u_2)-B_2(v_1,v_2)\|_{H_2}\leq C(\|u_1-v_1\|_{H_1}+\|u_2-v_2\|_{H_2}).
$$}
\item{  The $\zeta\in(0,1)$ in condition \eref{G2} is used to prove the $p$-th moments of the solution $(X^{\vare}_t, Y^{\vare}_t)$ are finite, when $p$ is large enough, which could be removed if we assume the Lispchitz constant of $G_2$ is sufficiently small.}
\end{itemize}
\end{remark}

\medskip
Now, we recall the definition of a variational solution in \cite{LR1}.

\begin{definition}\label{S.S.}
For any given $\vare>0$, a continuous $H_1\times H_2$-valued $\mathscr{F}_{t}$-adapted process $(X^{\vare}_t, Y^{\vare}_t)_{t\in [0, T]}$ is called a solution of system \eqref{main equation}, if for its $dt\otimes \PP$-equivalence class $(\hat{X}^{\vare}, \hat{Y}^{\vare})$ we have $\hat{X}^{\vare}\in L^{\alpha}([0, T]\times \Omega, dt\otimes \PP; V_1)\cap L^2([0, T]\times\Omega, dt\otimes \PP; H_1)$ with $\alpha$ as in \ref{A3}, $\hat{Y}^{\vare}\in L^{\kappa}([0, T]\times \Omega, dt\otimes \PP; V_2)\cap L^2([0, T]\times\Omega, dt\otimes \PP; H_2)$ with $\kappa$ as in \ref{B3} and $\PP$-a.s.
\begin{equation}\left\{\begin{array}{l}\label{mild solution}
\displaystyle
X^{\varepsilon}_t=X^{\varepsilon}_0+\int^t_0 A(\tilde{X}^{\varepsilon}_s)ds+\int^t_0 F(X^{\varepsilon}_s, Y^{\varepsilon}_s)ds+\int^t_0 G(\tilde{X}^{\varepsilon}_s)dW^{1}_s,\\
Y^{\varepsilon}_t=Y^{\varepsilon}_0+\frac{1}{\varepsilon}\int^t_0 B(X^{\varepsilon}_s, \tilde{Y}^{\varepsilon}_s)ds
+\frac{1}{\sqrt{\varepsilon}}\int^t_0 G_2(X^{\varepsilon}_s, \tilde{Y}^{\varepsilon}_s)dW^{2}_s,
\end{array}\right.
\end{equation}
where $(\tilde{X}^{\varepsilon}, \tilde{Y}^{\varepsilon})$ is any $V_1\times V_2$-valued progressively measurable $dt\otimes \PP$-version of $(\hat{X}^{\vare},\hat{Y}^{\vare})$.
\end{definition}

Using the variational approach in infinite dimensional space, we have the following well-posedness result, whose proof will be presented in the Appendix.

\begin{theorem}\label{Th1}
Assume the conditions \ref{A1}-\ref{A4}, \ref{B1}-\ref{B4} hold. Then for any $\vare>0$ and initial values $(x, y)\in H_1\times H_2$, the system \eqref{main equation} has a unique solution $(X^{\varepsilon},Y^{\varepsilon})$.
\end{theorem}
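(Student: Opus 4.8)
The plan is to recast the coupled system \eqref{main equation} as a single stochastic evolution equation and then to run the variational existence/uniqueness argument of \cite{LR1} for it. Set $\mathcal V:=V_1\times V_2$ and $\mathcal H:=H_1\times H_2$; a product of reflexive Banach spaces continuously and densely embedded into Hilbert spaces is again of that type, so $\mathcal V\subseteq\mathcal H\equiv\mathcal H^*\subseteq\mathcal V^*$ is once more a Gelfand triple. With $\mathcal W:=(W^1,W^2)$ a cylindrical Wiener process on $U_1\times U_2$, $Z:=(X,Y)$, drift $\mathcal A(x,y):=\big(A(x)+F(x,y),\ \varepsilon^{-1}B(x,y)\big)$ and diffusion $\mathcal G(x,y):=\diag\big(G_1(x),\ \varepsilon^{-1/2}G_2(x,y)\big)$, system \eqref{main equation} reads $dZ_t=\mathcal A(Z_t)\,dt+\mathcal G(Z_t)\,d\mathcal W_t$, and the regularity asserted in Definition \ref{S.S.} will be read off from the energy estimate.

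Hemicontinuity of $\mathcal A$ is immediate from \ref{A1}, \ref{B1} and the Lipschitz continuity of $F$ in \eqref{LF}. The decisive condition is local monotonicity: for $z=(x,y)$, $z'=(x',y')\in\mathcal V$ one decomposes
\[
2{}_{\mathcal V^*}\langle\mathcal A(z)-\mathcal A(z'),z-z'\rangle_{\mathcal V}+\|\mathcal G(z)-\mathcal G(z')\|^2_{L_2}
\]
into (i) the slow diagonal block, $\leq\rho(x')\|x-x'\|^2_{H_1}$ by \ref{A2}; (ii) the fast diagonal block with the $H_1$-slot of $B,G_2$ frozen at $x'$, $\leq-\gamma\|y-y'\|^2_{H_2}$ by \eqref{dc}; and (iii) three lower-order cross terms, coming from $F(x,y)-F(x',y')$, from replacing $B(x',y)$ by $B(x,y)$, and from the $H_1$-Lipschitz dependence of $G_2$. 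By \eqref{LF}, \eqref{LPB}, the Lipschitz bound on $G_2$ and Young's inequality with a small weight, each cross term is bounded by $C_\varepsilon\|x-x'\|^2_{H_1}$ plus an arbitrarily small multiple of $\|y-y'\|^2_{H_2}$, which is absorbed by the $-\gamma\|y-y'\|^2_{H_2}$ in (ii); the sum is $\leq\big(\rho(x')+C_\varepsilon\big)\|z-z'\|^2_{\mathcal H}$, and $v\mapsto\rho(v)+C_\varepsilon$ still obeys the growth $|\rho(v)+C_\varepsilon|\leq C_\varepsilon(1+\|v\|^\alpha_{V_1})(1+\|v\|^\beta_{H_1})$ required on $\mathcal V$. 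For coercivity one adds \ref{A3} and \ref{B3} and bounds $2\langle F(x,y),x\rangle_{H_1}$, $\|G_1(x)\|^2$ and $\varepsilon^{-1}\|G_2(x,y)\|^2$ by $C_\varepsilon(1+\|x\|^2_{H_1}+\|y\|^2_{H_2})$ — here $\zeta<1$ in \eqref{G2} gives $\|y\|^{2\zeta}_{H_2}\leq 1+\|y\|^2_{H_2}$ — to obtain
\[
2{}_{\mathcal V^*}\langle\mathcal A(z),z\rangle_{\mathcal V}+\|\mathcal G(z)\|^2_{L_2}\leq C_\varepsilon\big(1+\|z\|^2_{\mathcal H}\big)-2\theta\|x\|^\alpha_{V_1}-2\varepsilon^{-1}\eta\|y\|^\kappa_{V_2},
\]
which is coercive of order $\alpha\wedge\kappa$ (after a trivial reduction of exponents) and, crucially, keeps the two dissipation terms separately in the a priori bound. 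The growth bound for $\mathcal A$ follows componentwise from \ref{A4}, \ref{B4} and the linear growth of $F$ from \eqref{LF}; one keeps the block-diagonal splitting $\mathcal A=\diag(A,\varepsilon^{-1}B)+(\text{perturbation bounded in }\mathcal H)$ explicit so that the two unbounded blocks retain their own exponents $\alpha$ and $\kappa$ instead of being collapsed to one.

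Existence then proceeds by the Galerkin scheme of \cite{LR1}: finite-dimensional approximations $(X^n,Y^n)$ on spans of orthonormal bases of $H_1,H_2$ lying in $V_1,V_2$; uniform bounds $\sup_n\EE\big[\sup_{t\leq T}(\|X^n_t\|^2_{H_1}+\|Y^n_t\|^2_{H_2})+\int_0^T(\|X^n_s\|^\alpha_{V_1}+\|Y^n_s\|^\kappa_{V_2})\,ds\big]<\infty$ from the coercivity estimate, It\^o's formula and the Burkholder--Davis--Gundy inequality; extraction of weak/weak-$*$ limits $X,Y$ and $\overline{\mathcal A},\overline{\mathcal G}$ in the appropriate reflexive dual spaces; and the identification $\overline{\mathcal A}=\mathcal A(X,Y)$, $\overline{\mathcal G}=\mathcal G(X,Y)$ via the local-monotonicity inequality tested against arbitrary $\mathcal V$-valued progressively measurable processes — treating the combined pair $(\mathcal A,\mathcal G)$ is exactly what lets the cross terms pass to the limit. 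Uniqueness is the standard comparison: for two solutions $Z,Z'$ apply It\^o's formula to $e^{-\int_0^t(\rho(X'_s)+C_\varepsilon)\,ds}\|Z_t-Z'_t\|^2_{\mathcal H}$, use the monotonicity estimate, and localize with $\tau_R:=\inf\{t\in[0,T]:\int_0^t(\rho(X'_s)+C_\varepsilon)\,ds\geq R\}$, which increases to $T$ a.s.\ because $X'\in L^\alpha([0,T];V_1)$ a.s.\ and $\rho$ has the stated growth; letting $R\to\infty$ yields $Z\equiv Z'$.

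The main obstacle is the local-monotonicity verification for the \emph{coupled} system: the contributions of $F$, of the $H_1$-dependence of $B$ through \eqref{LPB}, and of the $H_1$-Lipschitz dependence of $G_2$ must all be arranged so as to be swallowed by the slow $\rho$-term together with the strong fast dissipation $-\gamma\|y-y'\|^2_{H_2}$ from \eqref{dc}. A secondary technical point, and the reason for keeping the block structure of $\mathcal A$ visible, is that the slow and fast parts carry different exponents $\alpha\neq\kappa$, so one cannot merely quote the single-exponent form of the abstract variational theorem but must track the two dissipation terms $\theta\|x\|^\alpha_{V_1}$ and $\varepsilon^{-1}\eta\|y\|^\kappa_{V_2}$ independently throughout.
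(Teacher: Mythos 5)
Your proposal is correct and follows essentially the same route as the paper's Appendix: recast the coupled system as a single variational SPDE on the product Gelfand triple $\mathcal V=V_1\times V_2\subset\mathcal H=H_1\times H_2\subset\mathcal V^*$, verify hemicontinuity, local monotonicity (absorbing the cross terms from $F$, \eqref{LPB} and the $H_1$-Lipschitz part of $G_2$ into the strong dissipation \eqref{dc} via Young's inequality), coercivity and growth, and then invoke the existence/uniqueness theorem of \cite[Theorem 5.1.3]{LR1}. The only difference is one of detail: you spell out the Galerkin limit and the weighted-norm uniqueness argument (and flag the two-exponent issue $\alpha\neq\kappa$), whereas the paper delegates these to the cited theorem.
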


\medskip
The following is the main result of this work.
\begin{theorem}\label{main result 1}
Assume the conditions \ref{A1}-\ref{A4}, \ref{B1}-\ref{B4} hold. Then for any initial values $(x, y)\in H_1\times H_2$, $p\geq1$ and $T>0$, we have
\begin{align}
\lim_{\vare\rightarrow 0}\mathbb{E} \left(\sup_{t\in[0,T]}\|X_{t}^{\vare}-\bar{X}_{t}\|^{2p}_{H_1} \right)=0,\label{2.2}
\end{align}
where $\bar{X}_t$ is the solution of the corresponding averaged equation:
\begin{equation}\left\{\begin{array}{l}
\displaystyle d\bar{X}_{t}=A(\bar{X}_{t})dt+\bar{F}(\bar{X}_{t})dt+G_1(\bar{X}_t)d W^{1}_{t},\\
\bar{X}_{0}=x,\end{array}\right. \label{1.3}
\end{equation}
with the average $\bar{F}(x)=\int_{H_2}F(x,y)\mu^{x}(dy)$. $\mu^{x}$ is the unique invariant measure of the transition semigroup of the frozen equation
\begin{eqnarray*}
\left\{ \begin{aligned}
&dY_{t}=B(x,Y_{t})dt+G_2(x,Y_t)d\bar{W}_{t}^{2},\\
&Y_{0}=y,
\end{aligned} \right.
\end{eqnarray*}
where $\{\bar{W}^{2}_t\}_{t\geq 0}$ is a $\tilde{\mathscr{F}}_t$-cylindrical Wiener process in a separable Hilbert space $U_2$ on another probability space, with natural filtration $\tilde{\mathscr{F}}_t$.
\end{theorem}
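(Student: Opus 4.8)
The plan is to adapt the classical Khasminskii time-discretization scheme to the variational setting. First I would establish a battery of \emph{a priori} estimates: using the coercivity conditions \ref{A3} and \ref{B3} together with It\^o's formula in the Gelfand triples, I would show that for every $p\geq 1$,
\[
\sup_{\vare>0}\EE\Big(\sup_{t\in[0,T]}\|X^{\vare}_t\|^{2p}_{H_1}\Big)+\sup_{\vare>0}\sup_{t\in[0,T]}\EE\|Y^{\vare}_t\|^{2p}_{H_2}<\infty,
\]
and also $\sup_{\vare}\EE\int_0^T\|X^{\vare}_t\|^{\alpha}_{V_1}dt<\infty$. The growth restriction $\zeta\in(0,1)$ in \ref{B4} is exactly what makes the moment bound on $Y^{\vare}$ uniform in $\vare$, despite the $1/\vare$ and $1/\sqrt{\vare}$ scalings; the dissipativity \ref{B2} is what prevents blow-up of the fast component. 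Next I would prove a regularity-in-time estimate for the slow component \emph{in integrated form}: rather than H\"older continuity of $t\mapsto X^{\vare}_t$ (which is unavailable without a smoothing semigroup), I would bound $\EE\int_0^{T}\|X^{\vare}_{t}-X^{\vare}_{t(\delta)}\|^{2}_{H_1}\,dt$ by $C\,\delta^{\rm something}$ (up to a stopping time controlling the nonlinearity), where $t(\delta)=\lfloor t/\delta\rfloor\delta$, using \ref{A4}, the growth of $G_1$, and the Lipschitz bound \eref{LF} on $F$. This is the point the introduction flags as the key new idea.

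The second ingredient is the ergodic analysis of the frozen equation. For fixed $x\in H_1$, conditions \ref{B1}--\ref{B4} put the frozen equation into the strongly monotone variational framework, so it has a unique solution and, by \eref{dc}, two solutions started from $y_1,y_2$ satisfy $\EE\|Y^{x,y_1}_t-Y^{x,y_2}_t\|^2_{H_2}\leq e^{-\gamma t}\|y_1-y_2\|^2_{H_2}$; this exponential contraction yields a unique invariant measure $\mu^x$ and exponential mixing of the transition semigroup $P^x_t$. I would also record the Lipschitz dependence $\|\bar F(x_1)-\bar F(x_2)\|_{H_1}\leq C\|x_1-x_2\|_{H_1}$ (from \eref{LF}, \eref{LPB} and the Lipschitz dependence of $\mu^x$ on $x$, the latter obtained by comparing $Y^{x_1,y}$ and $Y^{x_2,y}$ via \eref{LPB}), which guarantees the averaged equation \eref{1.3} itself falls under \ref{A1}--\ref{A4} and hence is well-posed.

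The core of the argument is then the Khasminskii scheme. I would introduce an auxiliary process $\hat Y^{\vare}_t$ which, on each interval $[t(\delta),t(\delta)+\delta)$, solves the fast equation with the slow input \emph{frozen} at $X^{\vare}_{t(\delta)}$, driven by the same noise; by the variational comparison using \eref{dc} and \eref{LPB} together with the time-regularity estimate above, $\EE\int_0^T\|Y^{\vare}_t-\hat Y^{\vare}_t\|^2_{H_2}dt\to 0$ at a controlled rate in $\delta,\vare$. Then, writing $X^{\vare}_t-\bar X_t$ and applying It\^o's formula with \ref{A2} (the local monotonicity, absorbed using the a priori bounds on a suitable stopping time $\tau_R$ where $\|X^{\vare}\|_{V_1}$ and the $H$-norms are controlled), the crucial term is
\[
\EE\Big|\int_0^{t\wedge\tau_R}\big\langle F(X^{\vare}_{s(\delta)},\hat Y^{\vare}_s)-\bar F(X^{\vare}_{s(\delta)}),\,X^{\vare}_s-\bar X_s\big\rangle_{H_1}\,ds\Big|,
\]
which is handled by the standard block estimate: on each block the frozen dynamics is an ergodic diffusion, so a change of time $s\mapsto s/\vare$ plus the exponential mixing of $P^x_t$ gives a bound of order $\delta+\vare/\delta$ after summing over the $T/\delta$ blocks. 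Collecting terms, Gronwall's inequality gives $\EE\big(\sup_{t\leq T\wedge\tau_R}\|X^{\vare}_t-\bar X_t\|^2_{H_1}\big)\leq C(\delta+\vare/\delta+\text{(time-reg.\ error)})$; optimizing $\delta=\delta(\vare)\to0$ with $\vare/\delta\to0$, and finally letting $R\to\infty$ using the uniform moment bounds (which make $\PP(\tau_R<T)\to0$ uniformly in $\vare$) and H\"older's inequality to pass from the $L^2$-estimate to the $L^{2p}$-estimate \eref{2.2}.

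The main obstacle, and the place where the variational machinery must genuinely replace the mild-solution toolbox, is the passage through the stopping time $\tau_R$: the local (rather than global) monotonicity in \ref{A2} means the error terms coming from $A$ carry the unbounded factor $\rho(X^{\vare})\lesssim(1+\|X^{\vare}\|^{\alpha}_{V_1})(1+\|X^{\vare}\|^{\beta}_{H_1})$, so one cannot simply apply Gronwall globally; one must run the whole comparison up to $\tau_R$, keep all constants explicit in $R$, and only at the very end trade off $R\to\infty$ against $\vare\to0$ using the uniform-in-$\vare$ a priori estimates. Making the time-regularity estimate for $X^{\vare}$ quantitative enough (in both $\delta$ and $\vare$, and with $R$-dependence) to survive this trade-off is the technical heart of the proof.
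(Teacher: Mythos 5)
Your proposal is correct and follows essentially the same route as the paper: uniform a priori moment bounds, the integrated time-increment estimate $\mathbb{E}\int_0^T\|X^{\vare}_t-X^{\vare}_{t(\delta)}\|^2_{H_1}dt\lesssim\delta^{1/2}$ in place of H\"older continuity, the Khasminskii auxiliary process, exponential mixing of the frozen equation, a stopping time to absorb the locally monotone term, and a final optimization of $\delta(\vare)$ followed by an $L^2\to L^{2p}$ upgrade via H\"older and the moment bounds. The only cosmetic difference is that the paper defines $\tau_R$ through the averaged solution $\bar X$ alone (applying \ref{A2} with $v=\tilde{\bar X}_s$, so $\rho(\tilde{\bar X}_s)$ appears and $\tau_R$ is independent of $\vare$), whereas you control $\rho(X^{\vare})$; both work since the uniform-in-$\vare$ estimates make $\mathbb{P}(\tau_R<T)$ small either way.
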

\begin{remark}
The advantage of using the variational approach is that it can cover some nonlinear SPDEs for slow component, such as stochastic power law fluids, and some quasilinear SPDEs for slow component, such as the stochastic porous medium equation and the stochastic $p$-Laplace equation, which can not be handled by the mild solution approach and thus have not been studied yet. Furthermore, our result also generalizes some known results of the cases that the slow component is a semilinear stochastic partial differential equation, such as the stochastic Burgers equation (see \cite{DSXZ}) and stochastic two dimensional Navier-Stokes equation (see \cite{LSXZ}). Besides some known results, our result can also be applied to many other unstudied hydrodynamical models in \cite{CM}, such as the stochastic magneto-hydrodynamic equations, the stochastic Boussinesq model for the B\'enard convection, the stochastic 2D magnetic B\'enard problem, the stochastic 3D Leray-$\alpha$ model and some stochastic shell models of turbulence.
\end{remark}

\section{Proof of the main result} \label{Sec Proof of Thm1}

This section is devoted to proving Theorem \ref{main result 1}.
The proof consists of the following four subsections: In Subsection 3.1, we give some apriori estimates for the solution $(X^{\varepsilon}_t, Y^{\varepsilon}_t)$. Using the apriori estimates, we get an estimate for the time increments for $X_{t}^{\varepsilon}$, which plays an important role in the proof of the main result. In Subsection 3.2, we will use the technique of time discretization to construct an auxiliary process $\hat{Y}_{t}^{\varepsilon}$ and give an estimate of the difference process $Y^{\varepsilon}_t-\hat{Y}_{t}^{\varepsilon}$. In Subsection 3.3,  by constructing a stopping time $\tau_R$, we prove that $X^{\varepsilon}_t$ strongly converges to $\bar{X}_{t}$ for $t<\tau_R$. Finally, the apriori estimates for the solution control the difference $X^{\vare}_t-\bar{X}_t$ after the stopping time. Note that we always assume conditions \ref{A1}-\ref{A4} and \ref{B1}-\ref{B4} hold and from now on we fix an initial value $(x,y)\in H_1\times H_2$ in this section.

\subsection{Some apriori estimates of \texorpdfstring{$(X^{\varepsilon}_t, Y^{\varepsilon}_t)$}{Lg}}
At first, we prove uniform bounds with respect to $\vare\in (0,1)$ for the moments of the solution $(X_{t}^{\varepsilon}, Y_{t}^{\vare})$ to the system \eref{main equation}.
\begin{lemma} \label{PMY} For any  $T>0$ and $p\geq1$, there exists a constant $C_{p,T}>0$ such that
\begin{align}
\sup_{\vare\in (0,1)}\mathbb{E}\left(\sup_{t\in[0,T]}\|X_{t}^{\vare}\|^{2p}_{H_1}\right)+\sup_{\vare\in (0,1)}\EE\left(\int^T_0\|X_{t}^{\vare}\|^{2p-2}_{H_1}\|\tilde{X}_{t}^{\vare}\|^2_{V_1}dt\right)\leq C_{p,T}\left(1+\|x\|^{2p}_{H_1}+\|y\|^{2p}_{H_2}\right)\label{F3.1}
\end{align}
and
\begin{align}
\sup_{\vare\in (0,1)}\sup_{t\in[0, T]}\mathbb{E}\|Y_{t}^{\varepsilon}\|^{2p}_{H_2}\leq C_{p,T}\left(1+\|x\|^{2p}_{H_1}+\|y\|^{2p}_{H_2}\right).\label{E3.2}
\end{align}
\end{lemma}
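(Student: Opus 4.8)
The plan is to treat the two estimates separately, handling $Y^\vare$ first since it feeds into the estimate for $X^\vare$ through the coupling term $F(X^\vare_t,Y^\vare_t)$ via the Lipschitz bound \eref{LF} and the growth bound \eref{G2}. For \eref{E3.2}, I would apply It\^o's formula to $\|Y^\vare_t\|^{2p}_{H_2}$ (rigorously, to a Yosida/Galerkin approximation, then pass to the limit, as is standard in the variational framework of \cite{LR1}). The drift contribution is controlled by the coercivity condition \ref{B3}: the term $\frac{1}{\vare}{}_{V^*_2}\langle B(X^\vare_t,\tilde Y^\vare_t),\tilde Y^\vare_t\rangle_{V_2}$ produces $\frac{C}{\vare}\|Y^\vare_t\|^2_{H_2}-\frac{\eta}{\vare}\|\tilde Y^\vare_t\|^\kappa_{V_2}+\frac{C}{\vare}(1+\|X^\vare_t\|^2_{H_1})$, while the It\^o correction term $\frac{1}{\vare}\|G_2(X^\vare_t,\tilde Y^\vare_t)\|^2_{L_2(U_2,H_2)}$ is bounded using \eref{G2} by $\frac{C}{\vare}(1+\|X^\vare_t\|^2_{H_1}+\|Y^\vare_t\|^{2\zeta}_{H_2})$, and since $\zeta<1$ this is absorbed, modulo a constant, into $\frac{C}{\vare}\|Y^\vare_t\|^2_{H_2}$ by Young's inequality. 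The crucial point is that \emph{the dissipativity is not yet visible here} — coercivity only gives the sign on the $V_2$-norm, not on the $H_2$-norm. To extract exponential decay one must instead combine \ref{B3} with the strong monotonicity \eref{dc} applied at $v=0$, which yields ${}_{V^*_2}\langle B(u_1,u),u\rangle_{V_2}+\frac12\|G_2(u_1,u)\|^2_{L_2}\le -\frac{\gamma}{2}\|u\|^2_{H_2}+C(1+\|u_1\|^2_{H_1})$ after controlling the cross terms with $B(u_1,0)$ and $G_2(u_1,0)$ using \eref{LPB}, \ref{B4} and \eref{G2}. This gives, after taking expectations and using $\sup_{s\le T}\EE\|X^\vare_s\|^2_{H_1}<\infty$ (established in the $X$-step), the differential inequality $\frac{d}{dt}\EE\|Y^\vare_t\|^{2p}_{H_2}\le -\frac{c}{\vare}\EE\|Y^\vare_t\|^{2p}_{H_2}+\frac{C_{p,T}}{\vare}(1+\|x\|^{2p}_{H_1})$, and Gronwall then yields the $\vare$-uniform bound \eref{E3.2} with the dependence on $\|y\|^{2p}_{H_2}$ coming from the initial value. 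Care is needed because the constant $C_{p,T}$ in \eref{E3.2} must not blow up as $\vare\to0$; this works precisely because both the dissipative gain and the source term carry the same $1/\vare$ prefactor.

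For \eref{F3.1}, I would apply It\^o's formula to $\|X^\vare_t\|^{2p}_{H_1}$. The leading drift term gives $2p\|X^\vare_t\|^{2p-2}_{H_1}\,{}_{V^*_1}\langle A(\tilde X^\vare_t),\tilde X^\vare_t\rangle_{V_1}$, and coercivity \ref{A3} turns this into $2p\|X^\vare_t\|^{2p-2}_{H_1}(C\|X^\vare_t\|^2_{H_1}-\theta\|\tilde X^\vare_t\|^\alpha_{V_1}+C)$ — the negative $-2p\theta\|X^\vare_t\|^{2p-2}_{H_1}\|\tilde X^\vare_t\|^\alpha_{V_1}$ term is exactly what will give the second term on the left-hand side of \eref{F3.1} (note $\alpha\ge2$ is not assumed, but $\|\tilde X^\vare_t\|^\alpha_{V_1}$ dominates via Young — actually here we keep the $\alpha$-power and only need $\|\tilde X^\vare_t\|^2_{V_1}\le C(1+\|\tilde X^\vare_t\|^\alpha_{V_1})$ when transferring to the stated form, or alternatively keep the $\alpha$-power throughout and note the $H_1$-bound suffices for the paper's later use; I will keep it as $\|\tilde X^\vare_t\|^2_{V_1}$ as written, absorbing via Young). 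The coupling term $2p\|X^\vare_t\|^{2p-2}_{H_1}\langle F(X^\vare_t,Y^\vare_t),X^\vare_t\rangle_{H_1}$ is bounded, using \eref{LF} (which gives linear growth of $F$ in both arguments after adding $\|F(0,0)\|$) and Young's inequality, by $C\|X^\vare_t\|^{2p}_{H_1}+C\|X^\vare_t\|^{2p-2}_{H_1}(1+\|Y^\vare_t\|^2_{H_2})$. The stochastic term and the It\^o correction $\|G_1(\tilde X^\vare_t)\|^2_{L_2(U_1,H_1)}$ are handled by the growth bound in \ref{A4}, i.e. $\|G_1(v)\|_{L_2}\le C(1+\|v\|_{H_1})$, giving a $C\|X^\vare_t\|^{2p}_{H_1}+C$ contribution. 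After taking supremum over $t$ and expectation, the martingale term is controlled by Burkholder--Davis--Gundy followed by Young's inequality, absorbing $\frac12\EE\sup_{s\le t}\|X^\vare_s\|^{2p}_{H_1}$ into the left. One then arrives at
\[
\EE\sup_{s\le t}\|X^\vare_s\|^{2p}_{H_1}+c\,\EE\!\int_0^t\|X^\vare_s\|^{2p-2}_{H_1}\|\tilde X^\vare_s\|^2_{V_1}\,ds\le C(1+\|x\|^{2p}_{H_1})+C\!\int_0^t\EE\sup_{r\le s}\|X^\vare_r\|^{2p}_{H_1}\,ds+C\!\int_0^t\EE\|X^\vare_s\|^{2p-2}_{H_1}(1+\|Y^\vare_s\|^2_{H_2})\,ds,
\]
and Gronwall gives \eref{F3.1} \emph{provided} $\sup_{\vare\in(0,1)}\sup_{s\le T}\EE\|Y^\vare_s\|^{2p}_{H_2}<\infty$ (bounding the last integral by Young: $\|X^\vare_s\|^{2p-2}_{H_1}\|Y^\vare_s\|^2_{H_2}\le\|X^\vare_s\|^{2p}_{H_1}+\|Y^\vare_s\|^{2p}_{H_2}$).

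The logical structure therefore has a potential circularity: \eref{F3.1} needs the $Y$-moment bound, and the sharp (dissipative) $Y$-estimate needs $\sup_s\EE\|X^\vare_s\|^2_{H_1}<\infty$. I would break this by a bootstrap: first derive a \emph{crude} bound $\EE\sup_{s\le T}\|X^\vare_s\|^2_{H_1}\le C_T(1+\|x\|^2_{H_1}+\|y\|^2_{H_2})$ and $\sup_s\EE\|Y^\vare_s\|^2_{H_2}\le C_T(\cdots)$ simultaneously via a coupled Gronwall argument on the pair $(\EE\sup_{s\le t}\|X^\vare_s\|^2_{H_1},\ \sup_{s\le t}\EE\|Y^\vare_s\|^2_{H_2})$ for $p=1$, using only coercivity \ref{A3}, \ref{B3}, growth, and \eref{LF} — crucially observing that the $\frac1\vare$ prefactor multiplies \emph{both} the dissipative term and the source term in the $Y$-equation so the $\vare$-uniformity survives. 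Then, armed with the $p=1$ $X$-bound, upgrade the $Y$-estimate to the dissipative form and to general $p\ge1$ (this is the step that genuinely uses \eref{dc} at $v=0$), and finally feed that back into the $X$-estimate for general $p$. \textbf{The main obstacle} I anticipate is precisely maintaining $\vare$-uniformity of all constants throughout this bootstrap — in particular ensuring that the $1/\vare$ factors in the fast equation always pair a dissipative loss against a source of the same order, and that $\zeta<1$ is genuinely exploited (via Young's inequality, $\|Y\|^{2p\zeta}_{H_2}\le \delta\|Y\|^{2p}_{H_2}+C_\delta$) so that the $G_2$ growth does not overwhelm the dissipativity; the remaining manipulations (It\^o on Galerkin approximations, BDG, Young, Gronwall) are routine.
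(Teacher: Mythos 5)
Your proposal contains the two ingredients the paper actually uses: the dissipativity inequality obtained by evaluating the strong monotonicity condition \eref{dc} at $v=0$ and controlling the cross terms with \eref{LPB}, \ref{B4} and \eref{G2} (this is exactly the paper's estimate \eref{E3.3}, $2{_{V^{*}_2}}\langle B(u,v),v\rangle_{V_2}\leq-\hat\gamma\|v\|^2_{H_2}+C(1+\|u\|^2_{H_1})$), and the It\^o/coercivity/Burkholder--Davis--Gundy treatment of the slow component. Your handling of $\zeta<1$ via Young's inequality is also what the paper does. The difference is in how you close the coupling, and there your proposed route has a genuine gap.

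The gap is in the first stage of your bootstrap. You claim a ``crude'' $\vare$-uniform $p=1$ bound can be obtained ``using only coercivity \ref{A3}, \ref{B3}, growth, and \eref{LF}'', arguing that the $1/\vare$ prefactor multiplies both ``the dissipative term and the source term.'' But \ref{B3} contains no dissipative term: it gives $+C\|v\|^2_{H_2}$ on the right-hand side, so the resulting differential inequality is $\frac{d}{dt}\EE\|Y^{\vare}_t\|^{2}_{H_2}\leq\frac{C}{\vare}\EE\|Y^{\vare}_t\|^{2}_{H_2}+\frac{C}{\vare}(1+\EE\|X^{\vare}_t\|^2_{H_1})$, and Gronwall then produces a factor $e^{CT/\vare}$, destroying the $\vare$-uniformity. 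The crude stage only works if you already invoke \eref{dc} at $v=0$, contradicting the premise of the bootstrap. Moreover, the circularity you are trying to break is not actually present: the dissipativity estimate \eref{E3.3} is a pointwise inequality on the coefficients and requires no moment information about $X^{\vare}$. The paper's resolution is to keep the fast-component bound in convolution form, $\EE\|Y^{\vare}_t\|^{2p}_{H_2}\leq\|y\|^{2p}_{H_2}e^{-C_{p,\gamma}t/\vare}+\frac{C_p}{\vare}\int_0^t e^{-C_{p,\gamma}(t-s)/\vare}(1+\EE\|X^{\vare}_s\|^{2p}_{H_1})\,ds$, substitute this into the $X$-estimate, and observe via Fubini that $\frac{1}{\vare}\int_s^T e^{-C_{p,\gamma}(t-s)/\vare}\,dt\leq C_{p,\gamma}^{-1}$, so the $1/\vare$ cancels and a single Gronwall application closes both \eref{F3.1} and \eref{E3.2} simultaneously for every $p\geq1$, with no bootstrap. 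You should replace your two-stage scheme by this one-pass argument; the rest of your proof then goes through.
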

\begin{proof}
According to It\^{o}'s formula in  \cite[Theorem 4.2.5]{LR1}, we have
\begin{eqnarray*}\label{ItoFormu 000}
\|Y_{t}^{\vare}\|^{2}_{H_2}=\!\!\!\!\!\!\!\!&&\|y\|^{2}_{H_2}+\frac{2}{\varepsilon}\int_{0}^{t} {_{V^{*}_2}}\langle B(X_{s}^{\varepsilon},\tilde{Y}_{s}^{\varepsilon}),\tilde{Y}_{s}^{\varepsilon}\rangle_{V_2} ds \nonumber \\
 \!\!\!\!\!\!\!\!&& + \frac{1}{\varepsilon}\int_{0} ^{t}\|G_2(X_{s}^{\varepsilon},\tilde{Y}_{s}^{\varepsilon})\|_{L_{2}(U_2,H_2)}^2ds
 +\frac{2}{\sqrt{\varepsilon}}\int_{0} ^{t}\langle G_2(X_{s}^{\varepsilon},\tilde{Y}_{s}^{\varepsilon})dW^{2}_s,Y_{s}^{\varepsilon}\rangle_{H_2}.
\end{eqnarray*}
Then applying It\^{o}'s formula for $f(z)=(z)^{p}$ with $z_t=\|Y_{t}^{\vare}\|^{2}_{H_2}$, and taking
expectation on both sides, we obtain
\begin{eqnarray*}\label{ItoFormu 001}
\mathbb{E}\|Y_{t}^{\vare}\|^{2p}_{H_2}=\!\!\!\!\!\!\!\!&&\|y\|^{2p}_{H_2}+\frac{2p}{\varepsilon}\mathbb{E}\left[\int_{0} ^{t}\| Y_{s}^{\varepsilon}\|^{2p-2}_{H_2}{_{V^{*}_2}}\langle B(X_{s}^{\varepsilon},\tilde{Y}_{s}^{\varepsilon}),\tilde{Y}_{s}^{\varepsilon}\rangle_{V_2} ds \right] \\
&&+ \frac{p}{\varepsilon}\mathbb{E}\left[\int_{0} ^{t}\|Y_{s}^{\varepsilon}\|^{2p-2}_{H_2}\|G_2(X_{s}^{\varepsilon},\tilde{Y}_{s}^{\varepsilon})\|_{L_{2}(U_2,H_2)}^2ds\right]
 \nonumber \\
 \!\!\!\!\!\!\!\!&&+\frac{2p(p-1)}{\varepsilon}\mathbb{E}\left[\int_{0} ^{t}\|Y_{s}^{\varepsilon}\|^{2p-4}_{H_2}\|G_2(X_{s}^{\varepsilon},
 \tilde{Y}_{s}^{\varepsilon})^*Y^\varepsilon_s\|_{U_2}^2ds\right]\nonumber
\end{eqnarray*}
By conditions \ref{B2}-\ref{B4} and a similar argument in the proof of \cite[Lemma 4.3.8]{LR1}, there exists a constant $\hat \gamma\in(0,\gamma)$ such that for any $u\in H_1, v\in V_2$,
\begin{eqnarray}
2{_{V^{*}_2}}\langle B(u,v),v\rangle_{V_2}\leq -\hat \gamma\|v\|^2_{H_2}+C(1+\|u\|^2_{H_1}).\label{E3.3}
\end{eqnarray}
Then by Young's inequality and estimate \eref{E3.3}, we get
\begin{eqnarray*}
\frac{d}{dt}\mathbb{E}\|Y_{t}^{\vare}\|^{2p}_{H_2}=\!\!\!\!\!\!\!\!&&\frac{2p}{\varepsilon}\mathbb{E}\left[\| Y_{t}^{\varepsilon}\|^{2p-2}_{H_2}{_{V^{*}_2}}\langle B(X^{\vare}_t, \tilde{Y}_{t}^{\varepsilon}),\tilde{Y}_{t}^{\varepsilon}\rangle_{V_2}\right]+ \frac{p}{\varepsilon}\mathbb{E}\left[\|Y_{t}^{\varepsilon}\|^{2p-2}_{H_2}
\|G_2(X_{t}^{\varepsilon},\tilde{Y}_{t}^{\varepsilon})\|_{L_{2}(U_2,H_2)}^2\right] \nonumber \\
 \!\!\!\!\!\!\!\!&&
 +\frac{2p(p-1)}{\varepsilon}\mathbb{E}\left[\|Y_{t}^{\varepsilon}\|^{2p-4}_{H_2}
 \|G_2(X_{t}^{\varepsilon},\tilde{Y}_{t}^{\varepsilon})^*Y^\varepsilon_t\|_{U_2}^2\right]\nonumber \\
 \leq\!\!\!\!\!\!\!\!&&\frac{2p}{\varepsilon}\mathbb{E}\left[\| Y_{t}^{\varepsilon}\|^{2p-2}_{H_2}(-\hat \gamma\|Y_{t}^{\varepsilon}\|^{2}_{H_2} +C\|X_{t}^{\varepsilon}\|_{H_2}^2+C)\right]\\
 \!\!\!\!\!\!\!\!&& +
 \frac{C_p}{\varepsilon}\mathbb{E}\left[\|Y_{t}^{\varepsilon}\|^{2p-2}_{H_2}
 (1+\|X_{t}^{\varepsilon}\|^2_{H_1}+\|Y_{t}^{\varepsilon}\|^{2\zeta}_{H_2})\right]
\nonumber \\
\leq\!\!\!\!\!\!\!\!&&-\frac{C_{p,\gamma}}{\varepsilon}\mathbb{E}\|Y_{t}^{\varepsilon}
\|^{2p}_{H_2}+\frac{C_{p}}{\varepsilon}\EE\|X_{t}^{\varepsilon}\|^{2p}_{H_1}
+\frac{C_{p}}{\varepsilon}.\label{4.4.2}
\end{eqnarray*}
Hence, the comparison theorem yields that
\begin{eqnarray}
\mathbb{E}\|Y_{t}^{\varepsilon}\|^{2p}_{H_2}\leq\!\!\!\!\!\!\!\!&&\|y\|^{2p}_{H_2}e^{-\frac{C_{p,\gamma}}{\varepsilon}t}+\frac{C_{p}}{\varepsilon}\int^t_0
e^{-\frac{C_{p,\gamma}}{\varepsilon}(t-s)}\left(1+\EE\|X_{s}^{\varepsilon}\|^{2p}_{H_1}\right)ds.\label{E3.4}
\end{eqnarray}

On the other hand, using It\^{o}'s formula, we also have
\begin{eqnarray*}
\|X_{t}^{\varepsilon}\|^{2p}_{H_1}=\!\!\!\!\!\!\!\!&&\|x\|^{2p}_{H_1}+2p\int_{0} ^{t}\| X_{s}^{\varepsilon}\|^{2p-2}_{H_1}{_{V^{*}_1}}\langle A(\tilde{X}_{s}^{\varepsilon}),\tilde{X}_{s}^{\varepsilon}\rangle_{V_1} ds+2p\int_{0} ^{t}\|X_{s}^{\varepsilon}\|^{2p-2}_{H_1} \langle F(X_{s}^{\varepsilon},Y_{s}^{\varepsilon}),X_{s}^{\varepsilon}\rangle_{H_1} ds\\
 \!\!\!\!\!\!\!\!&& +2p\int_{0} ^{t}\| X_{s}^{\varepsilon}\|^{2p-2}_{H_1}\langle X_{s}^{\varepsilon}, G_1(\tilde{X}_{s}^{\varepsilon})dW^{1}_s\rangle_{H_1}+p\int_{0} ^{t}\|X_{s}^{\varepsilon}\|^{2p-2}\|G_1( \tilde{X}_{s}^{\varepsilon})\|_{L_{2}(U_1,H_1)}^2ds\\
 \!\!\!\!\!\!\!\!&& +2p(p-1)\int_{0}^{t}\|X_{s}^{\varepsilon}\|^{2p-4}_{H_1}\|G_1(\tilde{X}_{s}^{\varepsilon})^{*}X^\varepsilon_s\|_{U_1}^2ds.
\end{eqnarray*}
Then by conditions \ref{A2}-\ref{A4}, \eref{E3.4} and the Burkholder-Davis-Gundy inequality, we get
\begin{eqnarray*}
&&\mathbb{E}\left(\sup_{t\in[0, T]}\|X_{t}^{\vare}\|^{2p}_{H_1}\right)+2p\theta\EE\left(\int^T_0\|X_{t}^{\vare}\|^{2p-2}_{H_1}\|\tilde{X}_{t}^{\vare}\|^{\alpha}_{V_1}dt\right)\\
\leq\!\!\!\!\!\!\!\!&&C_P(\|x\|^{2p}_{H_1}+1)+C_p\int^T_0\mathbb{E}\|X_{t}^{\varepsilon}\|^{2p}_{H_1}dt+C_p\int^T_0\mathbb{E}\| Y_{t}^{\varepsilon}\|^{2p}_{H_2}dt\\
\leq\!\!\!\!\!\!\!\!&&C_p(\|x\|^{2p}_{H_1}+\|y\|^{2p}_{H_2}+1)+C_p\int^T_0\mathbb{E}\|X_{t}^{\varepsilon}\|^{2p}_{H_1}dt+\frac{C_p}{\vare}\int^T_0\int^t_0
e^{-\frac{C_{p,\gamma}}{\varepsilon}(t-s)}\left(1+\EE\|X_{s}^{\varepsilon}\|^{2p}_{H_1}\right)dsdt\\
\leq\!\!\!\!\!\!\!\!&&C_p(\|x\|^{2p}_{H_1}+\|y\|^{2p}_{H_2}+1)+C_p\int^T_0\mathbb{E}\|X_{t}^{\varepsilon}\|^{2p}_{H_1}dt.
\end{eqnarray*}
Hence, applying Gronwall's inequality we obtain
\begin{eqnarray*}
\mathbb{E}\left(\sup_{t\in[0, T]}\|X_{t}^{\vare}\|^{2p}_{H_1}\right)+2p\EE\left(\int^T_0\|X_{t}^{\vare}\|^{2p-2}_{H_1}\|\tilde{X}_{t}^{\vare}\|^{\alpha}_{V_1}dt\right)
\leq\!\!\!\!\!\!\!\!&&C_{p,T}(\|x\|^{2p}_{H_1}+\|y\|^{2p}_{H_2}+1),\label{4.4.4}
\end{eqnarray*}
which also gives
\begin{eqnarray*}
\sup_{t\in[0, T]}\mathbb{E}\|Y_{t}^{\varepsilon}\|^{2p}_{H_2}\leq
C_{p,T}\left(1+\|x\|^{2p}_{H_1}+\|y\|^{2p}_{H_2}\right).\label{4.4.5}
\end{eqnarray*}
The proof is complete.
\end{proof}

\vspace{0.1cm}
Because the method of time discretization is used in this paper, the following estimate about the integral of the time increment plays an important role in the proof of our main result, which has been proved in the case of the stochastic 2D Navier-Stokes equation in \cite{LSXZ}.
\begin{lemma} \label{COX}
For any $T>0$,  $\vare\in(0,1)$ and $\delta>0$ small enough, there exist constants $C_{T}, m>0$ such that for any $(x,y)\in H_1\times H_2$,
\begin{align}
\mathbb{E}\left[\int^{T}_0\|X_{t}^{\varepsilon}-X_{t(\delta)}^{\varepsilon}\|^2_{H_1} dt\right]\leq C_{T}(1+\|x\|^m_{H_1}+\|y\|^m_{H_2})\delta^{1/2},\label{F3.7}
\end{align}
where $t(\delta):=[\frac{t}{\delta}]\delta$ and $[s]$ denotes the largest integer which is smaller than $s$.
\end{lemma}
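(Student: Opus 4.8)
\textit{Proof strategy.} The plan is to avoid estimating the $H_1$-norm of the drift increment $\int_{t(\delta)}^{t}A(\tilde X^{\varepsilon}_s)\,ds$ directly, which is not available in the variational framework since $A$ only takes values in $V_1^{*}$. Instead I would start from the polarization identity
\[
\|X^{\varepsilon}_t-X^{\varepsilon}_{t(\delta)}\|^{2}_{H_1}
=2\langle X^{\varepsilon}_t-X^{\varepsilon}_{t(\delta)},X^{\varepsilon}_t\rangle_{H_1}
-\bigl(\|X^{\varepsilon}_t\|^{2}_{H_1}-\|X^{\varepsilon}_{t(\delta)}\|^{2}_{H_1}\bigr),
\]
integrate it over $[0,T]$ and take expectations. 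The point of this rearrangement is that $A$ now appears only paired with $X^{\varepsilon}_t$ --- equivalently with its $V_1$-version $\tilde X^{\varepsilon}_t$, which for $dt\otimes\PP$-a.e.\ $(t,\omega)$ belongs to $V_1$, so that the $V_1^{*}$--$V_1$ duality is meaningful --- or inside the scalar difference $\|X^{\varepsilon}_t\|^{2}_{H_1}-\|X^{\varepsilon}_{t(\delta)}\|^{2}_{H_1}$, which is handled by It\^{o}'s formula (\cite[Theorem 4.2.5]{LR1}), where once more $A(\tilde X^{\varepsilon}_s)$ is paired only with $\tilde X^{\varepsilon}_s$.

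For the scalar-difference term I would apply It\^{o}'s formula on each subinterval $[k\delta,t]$, discard the mean-zero martingale part, and use the elementary inequality $\int_{k\delta}^{(k+1)\delta}\!\int_{k\delta}^{t}g(s)\,ds\,dt\le\delta\int_{k\delta}^{(k+1)\delta}|g(s)|\,ds$ to extract a factor $\delta$; the leftover space-time integrals of $|{}_{V_1^{*}}\langle A(\tilde X^{\varepsilon}_s),\tilde X^{\varepsilon}_s\rangle_{V_1}|$, $|\langle F(X^{\varepsilon}_s,Y^{\varepsilon}_s),X^{\varepsilon}_s\rangle_{H_1}|$ and $\|G_1(\tilde X^{\varepsilon}_s)\|^{2}_{L_2(U_1,H_1)}$ are finite by \ref{A4}, \eqref{LF} and Lemma \ref{PMY} (used with $p$ large enough that $2p-2\ge\beta$), so this term is $O(\delta)$. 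For the first term I would split $X^{\varepsilon}_t-X^{\varepsilon}_{t(\delta)}=I_1(t)+I_2(t)+I_3(t)$ into the $A$-drift, the $F$-drift and the stochastic integral. The contribution of $I_1$, namely $\mathbb{E}\int_0^T\!\int_{t(\delta)}^{t}{}_{V_1^{*}}\langle A(\tilde X^{\varepsilon}_s),\tilde X^{\varepsilon}_t\rangle_{V_1}\,ds\,dt$, is $O(\delta)$ by the growth bound \ref{A4}, Young's inequality and the same double-integral estimate, together with the a priori bounds for $\int_0^T\|\tilde X^{\varepsilon}_s\|^{\alpha}_{V_1}\,ds$ coming from Lemma \ref{PMY}; the contribution of $I_2$ is $O(\delta)$ by Cauchy--Schwarz, \eqref{LF} and Lemma \ref{PMY}.

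The rate-determining term is $2\mathbb{E}\int_0^T\langle I_3(t),X^{\varepsilon}_t\rangle_{H_1}\,dt$ with $I_3(t)=\int_{t(\delta)}^{t}G_1(\tilde X^{\varepsilon}_s)\,dW^{1}_s$. Because $X^{\varepsilon}_{t(\delta)}$ is $\mathscr{F}_{t(\delta)}$-measurable and $I_3(t)$ has zero conditional mean given $\mathscr{F}_{t(\delta)}$, this equals $2\mathbb{E}\int_0^T\langle I_3(t),X^{\varepsilon}_t-X^{\varepsilon}_{t(\delta)}\rangle_{H_1}\,dt$, which by Cauchy--Schwarz is at most $2\int_0^T\bigl(\mathbb{E}\|I_3(t)\|^{2}_{H_1}\bigr)^{1/2}\bigl(\mathbb{E}\|X^{\varepsilon}_t-X^{\varepsilon}_{t(\delta)}\|^{2}_{H_1}\bigr)^{1/2}dt$. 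Here $\mathbb{E}\|I_3(t)\|^{2}_{H_1}=\mathbb{E}\int_{t(\delta)}^{t}\|G_1(\tilde X^{\varepsilon}_s)\|^{2}_{L_2(U_1,H_1)}\,ds\le C\delta(1+\|x\|^{2}_{H_1}+\|y\|^{2}_{H_2})$ by \ref{A4} and Lemma \ref{PMY}, while $\mathbb{E}\|X^{\varepsilon}_t-X^{\varepsilon}_{t(\delta)}\|^{2}_{H_1}\le C(1+\|x\|^{2}_{H_1}+\|y\|^{2}_{H_2})$ follows crudely from Lemma \ref{PMY}; the product supplies the factor $\delta^{1/2}$. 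Collecting the pieces and using $\delta\le\delta^{1/2}$ for small $\delta$ gives \eqref{F3.7}.

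The main obstacle is the structural one just stressed: $\int_{t(\delta)}^{t}A(\tilde X^{\varepsilon}_s)\,ds$ need not belong to $H_1$, so one cannot split the increment by the triangle inequality and estimate the pieces separately in $H_1$ --- every occurrence of $A$ must be kept inside a $V_1^{*}$--$V_1$ pairing against a genuine $V_1$-element, and the polarization identity, with the factor $X^{\varepsilon}_t$ providing the required $V_1$-regularity for a.e.\ $(t,\omega)$, is precisely what makes this possible. A secondary, routine point is that Lemma \ref{PMY} must be invoked with a moment exponent large enough to absorb the $\|\cdot\|^{\beta}_{H_1}$ factor appearing in the growth condition \ref{A4}.
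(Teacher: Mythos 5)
Your proposal is correct and arrives at \eref{F3.7}, but by a route whose mechanics differ from the paper's, even though both stay inside the same variational philosophy of never letting $A(\tilde X^\varepsilon_s)$ leave a $V_1^*$--$V_1$ pairing. The paper first replaces $X^\varepsilon_{t(\delta)}$ by $X^\varepsilon_{t-\delta}$ via the triangle inequality and then applies It\^{o}'s formula directly to $\|X^\varepsilon_t-X^\varepsilon_{t-\delta}\|^2_{H_1}$, so that $A$ appears paired with the increment $\tilde X^\varepsilon_s-\tilde X^\varepsilon_{t-\delta}$ and is controlled by H\"older's inequality together with \ref{A4} and Lemma \ref{PMY}; the martingale term is then the one producing the rate $\delta^{1/2}$, via the Burkholder--Davis--Gundy inequality. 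You instead polarize, handle $\|X^\varepsilon_t\|^2_{H_1}-\|X^\varepsilon_{t(\delta)}\|^2_{H_1}$ by It\^{o}'s formula for the squared norm itself (so $A$ is paired with $\tilde X^\varepsilon_s$), split the cross term according to the equation so that $A$ is paired with $\tilde X^\varepsilon_t$, and center the rate-determining stochastic term by conditioning on $\mathscr{F}_{t(\delta)}$, finishing with the It\^{o} isometry and Cauchy--Schwarz rather than BDG. Both arguments rest on exactly the same ingredients --- \ref{A4}, \eref{LF}, Lemma \ref{PMY} with $2p-2\ge\beta$, and the elementary bound $\int_0^T\int_{t(\delta)}^t|g(s)|\,ds\,dt\le\delta\int_0^T|g(s)|\,ds$ --- and both isolate the stochastic integral as the only contribution of order $\delta^{1/2}$. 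What your version buys is that it avoids the auxiliary comparison with $X^\varepsilon_{t-\delta}$ (and the slightly glossed-over ``same argument'' for $X^\varepsilon_{t(\delta)}-X^\varepsilon_{t-\delta}$), and the conditional-centering step even shows the rate could be upgraded to $O(\delta)$ by applying Young's inequality to $\mathbb{E}\langle I_3(t),X^\varepsilon_t-X^\varepsilon_{t(\delta)}\rangle_{H_1}$ and absorbing $\tfrac14\mathbb{E}\int_0^T\|X^\varepsilon_t-X^\varepsilon_{t(\delta)}\|^2_{H_1}\,dt$ into the left-hand side, which is finite a priori by Lemma \ref{PMY}. When writing it up you should just make explicit that $\mathbb{E}\bigl[\langle I_3(t),X^\varepsilon_{t(\delta)}\rangle_{H_1}\bigr]=0$ because the stochastic integral is a true martingale increment, which follows from $\mathbb{E}\int_0^T\|G_1(\tilde X^\varepsilon_s)\|^2_{L_2(U_1,H_1)}\,ds<\infty$ by \ref{A4} and Lemma \ref{PMY}.
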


\begin{proof}
Note that
\begin{eqnarray}
\mathbb{E}\left[\int^{T}_0\|X_{t}^{\varepsilon}-X_{t(\delta)}^{\varepsilon}\|^2_{H_1}dt\right]\nonumber=\!\!\!\!\!\!\!\!&& \mathbb{E}\left(\int^{\delta}_0\|X_{t}^{\varepsilon}-x\|^2_{H_1}dt\right)+\mathbb{E}\left[\int^{T}_{\delta}\|X_{t}^{\varepsilon}-X_{t(\delta)}^{\varepsilon}\|^2_{H_1}dt\right]\nonumber\\
\leq\!\!\!\!\!\!\!\!&& C(1+\|x\|^2_{H_1}+\|y\|^2_{H_2})\delta +2\mathbb{E}\left(\int^{T}_{\delta}\|X_{t}^{\varepsilon}-X_{t-\delta}^{\varepsilon}\|^2_{H_1}dt\right)\nonumber\\
\!\!\!\!\!\!\!\!&&+2\mathbb{E}\left(\int^{T}_{\delta}\|X_{t(\delta)}^{\varepsilon}-X_{t-\delta}^{\varepsilon}\|^2_{H_1}dt\right).\label{F3.8}
\end{eqnarray}
Then applying It\^{o}'s formula we have
\begin{eqnarray}
\|X_{t}^{\varepsilon}-X_{t-\delta}^{\varepsilon}\|^{2}_{H_1}=\!\!\!\!\!\!\!\!&&2\int_{t-\delta} ^{t}{_{V^{*}_1}}\langle A(\tilde{X}_{s}^{\varepsilon}), \tilde{X}_{s}^{\varepsilon}-\tilde{X}_{t-\delta}^{\varepsilon}\rangle_{V_1} ds+ 2\int_{t-\delta} ^{t}\langle F(X_{s}^{\varepsilon}, X_{s}^{\varepsilon}), X_{s}^{\varepsilon}-X_{t-\delta}^{\varepsilon}\rangle_{H_1} ds\nonumber \\
 \!\!\!\!\!\!\!\!&& +\int_{t-\delta} ^{t}\|G_1(\tilde{X}_{s}^{\varepsilon})\|_{L_{2}(U_1,H_1)}^2ds
 +2\int_{t-\delta} ^{t}\langle X_{s}^{\varepsilon}-X_{t-\delta}, G_1(\tilde{X}_{s}^{\varepsilon})dW^{1}_s\rangle_{H_1} \nonumber\\
:=\!\!\!\!\!\!\!\!&&I_{1}(t)+I_{2}(t)+I_{3}(t)+I_{4}(t).  \label{F3.9}
\end{eqnarray}

For the term $I_1(t)$, by condition \ref{A4} and H\"{o}lder's inequality, there exist constants $m, C_T>0$ such that
\begin{eqnarray}
\mathbb{E}\left(\int^{T}_{\delta}|I_{1}(t)|dt\right)
\leq\!\!\!\!\!\!\!\!&& C\mathbb{E}\left(\int^{T}_{\delta}\int_{t-\delta} ^{t}\| A(\tilde{X}_{s}^{\varepsilon})\|_{V^{*}_1}
\|\tilde{X}_{s}^{\varepsilon}-\tilde{X}_{t-\delta}^{\varepsilon}\|_{V_1} ds dt\right)\nonumber\\
\leq\!\!\!\!\!\!\!\!&&C\left[\mathbb{E}\int^{T}_{\delta}\int_{t-\delta} ^{t}\|A(\tilde{X}_{s}^{\varepsilon})\|^{\frac{\alpha}{\alpha-1}}_{V^{*}_1}dsdt\right]^{\frac{\alpha-1}{\alpha}}
\left[\mathbb{E}\int^{T}_{\delta}\int_{t-\delta} ^{t}\|\tilde{X}_{s}^{\varepsilon}-\tilde{X}_{t-\delta}^{\varepsilon}\|^{\alpha}_{V_1} dsdt\right]^{\frac{1}{\alpha}}\nonumber\\
\leq\!\!\!\!\!\!\!\!&&C\left[\delta\mathbb{E}\int^{T}_0(1+\|\tilde{X}_{s}^{\varepsilon}\|^{\alpha}_{V_1})(1+\|X_{s}^{\varepsilon}\|^{\beta}_{H_1})ds\right]^{\frac{\alpha-1}{\alpha}}\cdot\left[\delta\mathbb{E}\int^{T}_0\|\tilde{X}_{s}^{\varepsilon}\|^{\alpha}_{V_1}ds\right]^{\frac{1}{\alpha}}\nonumber\\
\leq\!\!\!\!\!\!\!\!&&C_{T}(1+\|x\|^m_{H_1}+\|y\|^m_{H_2})\delta,\label{REGX1}
\end{eqnarray}
where we used the Fubini theorem and  \eref{F3.1} in the third and fourth inequality respectively.

For terms $I_{2}(t)$ and $I_3(t)$, by condition \ref{LF}, estimates \eref{F3.1} and \eref{E3.2}, we get
\begin{eqnarray}\label{REGX2}
&&\mathbb{E}\left(\int^{T}_{\delta}|I_{2}(t)|dt\right)\nonumber\\
\leq\!\!\!\!\!\!\!\!&&C\mathbb{E}\left[\int^{T}_{\delta}\int_{t-\delta} ^{t}(1+\|X_{s}^{\varepsilon}\|_{H_1}+\|Y_{s}^{\varepsilon}\|_{H_2})(\|X_{s}^{\varepsilon}\|_{H_1}+\|X_{t-\delta}^{\varepsilon}\|_{H_1})ds dt\right]\nonumber\\
\leq\!\!\!\!\!\!\!\!&&C\delta\mathbb{E}\left[\sup_{s\in[0,T]}(1+\|X_{s}^{\varepsilon}\|^2_{H_1})\right]+C\mathbb{E}\left[\sup_{s\in[0,T]}\|X_{s}^{\varepsilon}\|_{H_1}\int^T_{\delta}\int^t_{t-\delta}\|Y^{\vare}_s\|_{H_2}dsdt\right]\nonumber\\
\leq\!\!\!\!\!\!\!\!&&C\delta\mathbb{E}\left[\sup_{s\in[0,T]}(1+\|X_{s}^{\varepsilon}\|^2_{H_1})\right]\!\!+\!\!C_T\delta^{1/2}\mathbb{E}\left[\sup_{s\in[0,T]}\|X_{s}^{\varepsilon}\|^2_{H_1}\right]^{1/2}\!\!\!\!\!\mathbb{E}\left(\int^T_{\delta}\int^t_{t-\delta}\|Y_{s}^{\varepsilon}\|^2_{H_2}dsdt\right)^{1/2}\nonumber\\
\leq\!\!\!\!\!\!\!\!&&C_{T}\delta(1+\|x\|^2_{H_1}+\|y\|^2_{H_2})
\end{eqnarray}
and
\begin{eqnarray}\label{REGX3}
\mathbb{E}\left(\int^{T}_{\delta}|I_{3}(t)|dt\right)\leq\!\!\!\!\!\!\!\!&&C\mathbb{E}\left(\int^{T}_{\delta}\int_{t-\delta} ^{t}(1+\|X_{s}^{\varepsilon}\|^2_{H_1})ds dt\right)\nonumber\\
\leq\!\!\!\!\!\!\!\!&&C_T\delta\mathbb{E}\left[\sup_{s\in[0,T]}(1+\|X_{s}^{\varepsilon}\|^2_{H_1})\right]\nonumber\\
\leq\!\!\!\!\!\!\!\!&&C_{T}\delta(1+\|x\|^2_{H_1}+\|y\|^2_{H_2}).
\end{eqnarray}

For the term $I_{4}(t)$, the Burkholder-Davies-Gundy inequality yields
\begin{eqnarray}  \label{REGX4}
\mathbb{E}\left(\int^{T}_{\delta}|I_{4}(t)|dt\right)\leq\!\!\!\!\!\!\!\!&&C\mathbb{E}\int^{T}_{\delta}\left[\int_{t-\delta} ^{t}\|G_1(\tilde{X}_{s}^{\varepsilon})\|^2_{L_{2}(U_1,H_1)}\|X_{s}^{\varepsilon}-X_{t-\delta}^{\varepsilon}\|^2_{H_1} ds\right]^{1/2}dt\nonumber\\
\leq\!\!\!\!\!\!\!\!&&C_T\left[\mathbb{E}\int^{T}_{\delta}\int_{t-\delta} ^{t}(1+\|X_{s}^{\varepsilon}\|^2_{H_1})\|X_{s}^{\varepsilon}-X_{t-\delta}^{\varepsilon}\|^2_{H_1}dsdt\right]^{1/2}\nonumber\\
\leq\!\!\!\!\!\!\!\!&&C_{T}\delta^{1/2}\left[\mathbb{E}\sup_{s\in[0,T]}\left(1+\|X_{s}^{\varepsilon}\|^4_{H_1}\right)\right]^{1/2}\nonumber\\
\leq\!\!\!\!\!\!\!\!&&C_{T}\delta^{1/2}(1+\|x\|^2_{H_1}+\|y\|^2_{H_2}).
\end{eqnarray}

Combining estimates \eref{REGX1}-\eref{REGX4}, we obtain
\begin{eqnarray}
\mathbb{E}\left(\int^{T}_{\delta}\|X_{t}^{\varepsilon}-X_{t-\delta}^{\varepsilon}\|^2_{H_1}dt\right)\leq\!\!\!\!\!\!\!\!&&C_{T}(1+\|x\|^m_{H_1}+\|y\|^m_{H_2})\delta^{1/2}. \label{F3.13}
\end{eqnarray}
By the same argument above, we also have
\begin{eqnarray}
\mathbb{E}\left(\int^{T}_{\delta}\|X_{t(\delta)}^{\varepsilon}-X_{t-\delta}^{\varepsilon}\|^2_{H_1}dt\right)\leq\!\!\!\!\!\!\!\!&&C_{T}(1+\|x\|^m_{H_1}+\|y\|^m_{H_2})\delta^{1/2}. \label{F3.14}
\end{eqnarray}
Hence, the result \eref{F3.7} holds by estimates \eref{F3.8}, \eref{F3.13} and \eref{F3.14}. The proof is complete.
\end{proof}

\subsection{ Construction of the auxiliary process }
Based on the method of time discretization, which is inspired by \cite{K1}, we first construct an auxiliary process $\hat{Y}_{t}^{\varepsilon}\in H_2$ satisfying the following equation:
$$
d\hat{Y}_{t}^{\vare}=\frac{1}{\vare}B\left(X^{\vare}_{t(\delta)},\hat{Y}_{t}^{\vare}\right)dt+\frac{1}{\sqrt{\vare}}G_2\left(X^{\vare}_{t(\delta)},\hat{Y}_{t}^{\vare}\right)dW^{2}_t,\quad \hat{Y}_{0}^{\vare}=y\in H_2,
$$
where $\delta$ is a fixed positive number depending on $\vare$ and will be chosen later. Then for its $dt\otimes \PP$-equivalence class $\hat{\hat{Y}}^{\vare}$ we have $\hat{\hat{Y}}^{\vare}\in L^{\kappa}([0, T]\times \Omega, dt\otimes \PP; V_2)\cap L^2([0, T]\times\Omega, dt\otimes \PP; H_2)$ with $\kappa$ as in \ref{B3}, and for any $k\in \mathbb{N}$ and $t\in[k\delta,\min((k+1)\delta,T)]$, $\PP$-a.s.
\begin{eqnarray}
\hat{Y}_{t}^{\varepsilon}=\hat{Y}_{k\delta}^{\varepsilon}+\frac{1}{\varepsilon}\int_{k\delta}^{t}
B(X_{k\delta}^{\varepsilon},\tilde{\hat{Y}}_{s}^{\varepsilon})ds+\frac{1}{\sqrt{\varepsilon}}\int_{k\delta}^{t}G_2(X_{k\delta}^{\varepsilon},\tilde{\hat{Y}}_{s}^{\varepsilon})dW^{2}_s,\label{4.6a}
\end{eqnarray}
where $\tilde{\hat{Y}}^{\varepsilon}$ is any $V_2$-valued progressively measurable $dt\otimes \PP$-version of $\hat{\hat{Y}}^{\vare}$.

\vspace{0.1cm}
By the construction of $\hat{Y}_{t}^{\varepsilon}$, we obtain the following estimates, which will be used later.

\begin{lemma} \label{DEY}
For any $T>0$ and $\vare\in(0,1)$, there exist a constant $C_{T}>0$ and $m\in\NN$ such that
\begin{eqnarray}
\sup_{t\in[0,T]}\mathbb{E}\|\hat{Y}_{t}^{\vare}\|^2_{H_2}\leq
C_{T}(1+\|x\|^2_{H_1}+\|y\|^2_{H_2})\label{3.13a}
\end{eqnarray}
and
\begin{eqnarray}
\mathbb{E}\left(\int_0^{T}\|Y_{t}^{\varepsilon}-\hat{Y}_{t}^{\varepsilon}\|^2_{H_2}dt\right)\leq C_{T}\left(1+\|x\|^m_{H_1}+\|y\|^m_{H_2}\right)\delta^{1/2}. \label{3.14}
\end{eqnarray}
\end{lemma}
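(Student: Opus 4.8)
The two estimates in Lemma \ref{DEY} are proved by the same kind of energy/It\^o argument already used for Lemma \ref{PMY}, now applied to the frozen-coefficient equation \eref{4.6a}. First I would establish \eref{3.13a}. Apply It\^o's formula to $\|\hat Y^{\vare}_t\|^2_{H_2}$ on each interval $[k\delta, t]$, $t\in[k\delta,(k+1)\delta\wedge T]$, using \eref{4.6a}. Taking expectations kills the stochastic integral, and the drift term is controlled by the coercivity-type bound \eref{E3.3} (which holds for the frozen $B$ since it is just \ref{B2}--\ref{B4} evaluated at the fixed first argument $X^{\vare}_{k\delta}$), giving
\[
\frac{d}{dt}\EE\|\hat Y^{\vare}_t\|^2_{H_2}\leq -\frac{\hat\gamma}{\vare}\EE\|\hat Y^{\vare}_t\|^2_{H_2}+\frac{C}{\vare}\left(1+\EE\|X^{\vare}_{t(\delta)}\|^2_{H_1}\right),
\]
where the noise term is absorbed using \eref{G2} and $\zeta<1$ (Young's inequality, exactly as in Lemma \ref{PMY}). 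Since $\sup_{t\in[0,T]}\EE\|X^{\vare}_{t(\delta)}\|^2_{H_1}\leq \EE\sup_{t\in[0,T]}\|X^{\vare}_t\|^2_{H_1}$ is bounded by \eref{F3.1}, the comparison/Gr\"onwall argument gives \eref{3.13a} with a constant independent of $\vare$ (the prefactor $1/\vare$ in front of the dissipation exactly cancels the $1/\vare$ in front of the forcing).

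For \eref{3.14}, set $Z^{\vare}_t:=Y^{\vare}_t-\hat Y^{\vare}_t$, which on $[k\delta,(k+1)\delta\wedge T]$ satisfies
\[
dZ^{\vare}_t=\frac{1}{\vare}\left[B(X^{\vare}_t,\tilde Y^{\vare}_t)-B(X^{\vare}_{k\delta},\tilde{\hat Y}^{\vare}_t)\right]dt+\frac{1}{\sqrt\vare}\left[G_2(X^{\vare}_t,\tilde Y^{\vare}_t)-G_2(X^{\vare}_{k\delta},\tilde{\hat Y}^{\vare}_t)\right]dW^2_t,
\]
with $Z^{\vare}_{k\delta}=Y^{\vare}_{k\delta}-\hat Y^{\vare}_{k\delta}$. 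Apply It\^o's formula to $\|Z^{\vare}_t\|^2_{H_2}$. The key is to split the drift-plus-It\^o-correction into a "diagonal" part, which uses the strong monotonicity \ref{B2} in the second variable to produce $-\frac{\gamma}{\vare}\|Z^{\vare}_t\|^2_{H_2}$, and a "cross" part coming from the frozen first argument, which is estimated by \eref{LPB} for the drift and by the Lipschitz bound on $G_2$ in the first variable: both produce terms of the form $\frac{C}{\vare}\|X^{\vare}_t-X^{\vare}_{t(\delta)}\|_{H_1}\|Z^{\vare}_t\|_{H_2}$ (resp. its square), which Young's inequality turns into $\frac{\gamma}{2\vare}\|Z^{\vare}_t\|^2_{H_2}+\frac{C}{\vare}\|X^{\vare}_t-X^{\vare}_{t(\delta)}\|^2_{H_1}$. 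Thus
\[
\frac{d}{dt}\EE\|Z^{\vare}_t\|^2_{H_2}\leq -\frac{\gamma}{2\vare}\EE\|Z^{\vare}_t\|^2_{H_2}+\frac{C}{\vare}\EE\|X^{\vare}_t-X^{\vare}_{t(\delta)}\|^2_{H_1},
\]
and since $Z^{\vare}_0=0$, the variation-of-constants formula gives
\[
\EE\|Z^{\vare}_t\|^2_{H_2}\leq \frac{C}{\vare}\int_0^t e^{-\frac{\gamma}{2\vare}(t-s)}\EE\|X^{\vare}_s-X^{\vare}_{s(\delta)}\|^2_{H_1}\,ds.
\]
Integrating in $t$ over $[0,T]$, using Fubini and $\frac{1}{\vare}\int_s^T e^{-\frac{\gamma}{2\vare}(t-s)}dt\leq \frac{2}{\gamma}$, reduces \eref{3.14} to
\[
\EE\int_0^T\|Z^{\vare}_t\|^2_{H_2}\,dt\leq \frac{2C}{\gamma}\,\EE\int_0^T\|X^{\vare}_s-X^{\vare}_{s(\delta)}\|^2_{H_1}\,ds,
\]
and the right-hand side is $\leq C_T(1+\|x\|^m_{H_1}+\|y\|^m_{H_2})\delta^{1/2}$ by Lemma \ref{COX}.

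\textbf{Main obstacle.} The only genuinely delicate point is the bookkeeping across the discretization grid: the process $\hat Y^{\vare}$ is only defined piecewise through \eref{4.6a}, so It\^o's formula must be applied interval by interval and the resulting differential inequality for $\EE\|Z^{\vare}_t\|^2_{H_2}$ must be checked to hold across the grid points $k\delta$ (it does, because $Z^{\vare}$ is continuous and $Z^{\vare}_{k\delta}$ is just the terminal value from the previous interval). One also needs $\hat Y^{\vare}$ to have the integrability required for It\^o's formula from \cite[Theorem 4.2.5]{LR1}, i.e.\ $\hat{\hat Y}^{\vare}\in L^{\kappa}\cap L^2$; this follows from the coercivity \ref{B3} of the frozen equation together with the a priori control of $X^{\vare}_{t(\delta)}$, exactly as in the well-posedness proof. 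Everything else is a routine repetition of the estimates in the proofs of Lemmas \ref{PMY} and \ref{COX}, the crucial structural inputs being the strong monotonicity \ref{B2} (to get the $-\gamma/\vare$ dissipation that cancels the $1/\vare$ prefactors) and the "local Lipschitz in the first variable" bound \eref{LPB}.
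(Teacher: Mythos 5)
Your proposal is correct and follows essentially the same route as the paper: for \eref{3.13a} the paper likewise just invokes the argument of Lemma \ref{PMY}, and for \eref{3.14} it applies It\^{o}'s formula to $\|Y^{\vare}_t-\hat{Y}^{\vare}_t\|^2_{H_2}$, uses the strong monotonicity \eref{dc} together with \eref{LPB} and the Lipschitz bound on $G_2$ exactly as in your diagonal/cross splitting, obtains the same differential inequality and variation-of-constants bound, and concludes via Fubini and Lemma \ref{COX}. The only difference is that you make explicit the interval-by-interval bookkeeping and integrability checks for the piecewise-defined $\hat{Y}^{\vare}$, which the paper leaves implicit.
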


\begin{proof}
Because the proof of estimate \eref{3.13a} follows
almost the same steps as in the proof of Lemma \ref{PMY}, we omit its proof and only prove \eref{3.14} here.

Note that
\begin{eqnarray*}
Y_{t}^{\varepsilon}-\hat{Y}_{t}^{\varepsilon}=\!\!\!\!\!\!\!\!&&\frac{1}{\varepsilon}\int^t_0 \left[B(X^{\varepsilon}_s, \tilde{Y}^{\varepsilon}_s)-B(X^{\varepsilon}_{s(\delta)}, \tilde{\hat{Y}}^{\varepsilon}_s)\right]ds\\
 \!\!\!\!\!\!\!\!&&+\frac{1}{\sqrt{\varepsilon}}\int^t_0 \left[ G_2(X^{\varepsilon}_s, \tilde{Y}^{\varepsilon}_s)-G_2(X^{\varepsilon}_{s(\delta)}, \tilde{\hat{Y}}^{\varepsilon}_s)\right]dW^{2}_s.
\end{eqnarray*}
By It\^{o}'s formula, we obtain
\begin{eqnarray*}
\EE\|Y_{t}^{\varepsilon}-\hat{Y}_{t}^{\varepsilon}\|^{2}_{H_2}=\!\!\!\!\!\!\!\!&&\frac{2}{\varepsilon}\EE\int_{0} ^{t}{_{V^{*}_2}}\langle B(X^{\varepsilon}_s, \tilde{Y}^{\varepsilon}_s)-B(X^{\varepsilon}_{s(\delta)}, \tilde{\hat{Y}}^{\varepsilon}_s),\tilde{Y}_{s}^{\varepsilon}-\tilde{\hat{Y}}_{s}^{\varepsilon}\rangle_{V_2} ds \nonumber \\
 \!\!\!\!\!\!\!\!&& + \frac{1}{\varepsilon}\EE\int_{0} ^{t}\|G_2(X^{\varepsilon}_s, \tilde{Y}^{\varepsilon}_s)-G_2(X^{\varepsilon}_{s(\delta)}, \tilde{\hat{Y}}^{\varepsilon}_s)\|_{L_{2}(U_2,H_2)}^2ds.
\end{eqnarray*}
Then by condition \ref{B2}, there exists $\gamma>0$ such that
\begin{eqnarray*}
\frac{d}{dt}\EE\|Y_{t}^{\varepsilon}-\hat{Y}_{t}^{\varepsilon}\|^{2}_{H_2}
\leq\!\!\!\!\!\!\!\!&&-\frac{2\gamma}{\varepsilon}\EE\|Y_{t}^{\varepsilon}-\hat{Y}_{t}^{\varepsilon}\|^{2}_{H_2}+\frac{C}{\varepsilon}\EE\left(\|X_t^\varepsilon-X_{t(\delta)}^\varepsilon\|_{H_1}
\cdot\|Y_{t}^{\varepsilon}-\hat{Y}_{t}^{\varepsilon}\|_{H_2}\right)+\frac{C}{\varepsilon}\EE\|X_t^\varepsilon-X_{t(\delta)}^\varepsilon\|^2_{H_1}\nonumber\\
\leq\!\!\!\!\!\!\!\!&&-\frac{\gamma}{\varepsilon}\EE\|Y_{t}^{\varepsilon}-\hat{Y}_{t}^{\varepsilon}\|^{2}_{H_2}+\frac{C}{\varepsilon}\EE\|X_t^\varepsilon-X_{t(\delta)}^\varepsilon\|^2_{H_1}.\nonumber
\end{eqnarray*}
The comparison theorem yields
\begin{eqnarray*}
\EE\|Y_{t}^{\varepsilon}-\hat{Y}_{t}^{\varepsilon}\|^{2}_{H_2}\leq\frac{C}{\varepsilon}\int_0^te^{-\frac{\gamma(t-s)}{\vare}}\EE\|X_s^\varepsilon-X_{s(\delta)}^\varepsilon\|^2_{H_1}ds.
\end{eqnarray*}
Then by Fubini's theorem, for any $T>0$,
\begin{eqnarray*}
\EE\left(\int_0^T\|Y_{t}^{\varepsilon}-\hat{Y}_{t}^{\varepsilon}\|^{2}_{H_2}dt\right)\leq\!\!\!\!\!\!\!\!&& \frac{C}{\varepsilon}\int_0^T\int^t_0e^{-\frac{\beta(t-s)}{\vare}}\EE\|X_s^\varepsilon-X_{s(\delta)}^\varepsilon\|^2_{H_1}dsdt\nonumber\\
=\!\!\!\!\!\!\!\!&&  \frac{C}{\varepsilon}\EE\left[\int_0^T\|X_s^\varepsilon-X_{s(\delta)}^\varepsilon\|^2_{H_1}\left(\int^T_s e^{-\frac{\beta(t-s)}{\vare}}dt\right)ds\right]\nonumber\\
\leq\!\!\!\!\!\!\!\!&& C\EE\left(\int_0^T\|X_s^\varepsilon-X_{s(\delta)}^\varepsilon\|^2_{H_1} ds\right).
\end{eqnarray*}
Therefore, by Lemma \ref{COX}, we obtain
\begin{eqnarray*}
\EE\left(\int_0^T\|Y_{t}^{\varepsilon}-\hat{Y}_{t}^{\varepsilon}\|^{2}_{H_2}dt\right)\leq C_{T}(1+\|x\|^m_{H_1}+\|y\|^m_{H_2})\delta^{1/2}.
\end{eqnarray*}
The proof is complete.
\end{proof}

\subsection{The ergodicity of the frozen equation}
The frozen equation associated to the fast motion for fixed slow component $x\in H_1$ is the following:
\begin{eqnarray}
\left\{ \begin{aligned}
&dY_{t}=B(x,Y_{t})dt+G_2(x,Y_t)d\bar{W}_{t}^{2},\\
&Y_{0}=y\in H_2,
\end{aligned} \right.\label{FEQ1}
\end{eqnarray}
where $\{\bar{W}^{2}_t\}_{t\geq 0}$ is a cylindrical $\tilde{\mathscr{F}}_{t}$-Wiener process in a separable Hilbert space $U_2$ on another probability space $(\tilde{\Omega},\tilde{\mathscr{F}},\tilde{\mathbb{P})}$ with natural
filtration $\tilde{\mathscr{F}}_{t}$.

Under the assumptions \ref{B1}-\ref{B4}, for any fixed $x\in H_1$ and initial data $y\in H_2$, equation $\eref{FEQ1}$ has a unique variational solution $Y_{t}^{x,y}$ in the sense of Definition \ref{S.S.}, i.e., for  its $dt\otimes \tilde{\PP}$-equivalence class $\hat{Y}$ we have $\hat{Y}^{x,y}\in L^{\kappa}([0, T]\times \tilde{\Omega}, dt\otimes \tilde{\PP}; V_2)\cap L^2([0, T]\times\tilde{\Omega}, dt\otimes \tilde{\PP}; H_2)$ with $\kappa$ as in \ref{B3}, we have $\tilde{\PP}$-a.s.
\begin{eqnarray}
Y^{x,y}_{t}=y+\int_{0}^{t}
B(x,\tilde{Y}^{x,y}_{s})ds+\int_{0}^{t}G_2(x,\tilde{Y}^{x,y}_{s})d\bar{W}^{2}_s,\label{SFE}
\end{eqnarray}
where $\tilde{Y}^{x,y}$ is any $V_2$-valued progressively measurable $dt\otimes\tilde{\PP}$-version of $\hat{Y}^{x,y}$. By the same arguments as in the proof of Lemma \ref{PMY}, it is easy to prove that
$$
\sup_{t\geq 0}\tilde{\EE}\|Y_{t}^{x,y}\|^2_{H_2}\leq C(1+\|x\|^2_{H_1}+\|y\|^2_{H_2}).
$$

Let $\{P^{x}_t\}_{t\geq 0}$ be the transition semigroup of the Markov process $\{Y_{t}^{x,y}\}_{t\geq 0}$,
that is, for any bounded measurable function $\varphi$ on $H_2$,
\begin{eqnarray*}
P^x_t \varphi(y)= \tilde{\mathbb{E}} \left[\varphi\left(Y_{t}^{x,y}\right)\right], \quad y \in H_2,\ \ t>0,
\end{eqnarray*}
where $\tilde \EE$ is the expectation on $(\tilde{\Omega},\tilde{\mathscr{F}},\tilde{\mathbb{P})}$. Then we have the following asymptotic behavior of $P^x_t$, whose proof can be founded in \cite[Theorem 4.3.9]{LR1}.

\begin{proposition}
The transition semigroup $\{P^{x}_t\}_{t\geq 0}$ has a unique invariant measure $\mu^x$. Moreover, there exists a constant $C>0$ such that for any Lipschitz function $\varphi:H_2\rightarrow R$,
\begin{equation}
\Big|P^x_t\varphi(y)-\int_{H_2}\varphi(z)\mu^x(dz)\Big|\leq C(1+\|x\|_{H_1}+\|y\|_{H_2})e^{-\frac{\eta t}{2}}\|\varphi\|_{Lip},\label{Ergodicity}
\end{equation}
where $\|\varphi\|_{Lip}=\sup_{y_1\neq y_2\in H_2}\frac{|\varphi(y_1)-\varphi(y_2)|}{\|y_1-y_2\|_{H_2}}$.
\end{proposition}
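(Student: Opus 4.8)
The plan is to obtain the whole statement from the strong monotonicity (dissipativity) condition \ref{B2} together with a uniform-in-time moment bound for the frozen equation \eref{FEQ1}; the coercivity \ref{B3} will enter only through the latter. \emph{Step 1 (exponential contraction of $P^x_t$).} I would first fix $x\in H_1$ and $y_1,y_2\in H_2$ and run two copies $Y^{x,y_1}_t,Y^{x,y_2}_t$ of \eref{FEQ1} driven by the \emph{same} Wiener process $\bar W^2$. Applying the It\^o formula of \cite[Theorem 4.2.5]{LR1} to $\|Y^{x,y_1}_t-Y^{x,y_2}_t\|^2_{H_2}$, the drift increment and the It\^o correction from the diffusion combine into exactly the left-hand side of \eref{dc}; after taking expectations the martingale term drops and one is left with
\[
\frac{d}{dt}\tilde\EE\|Y^{x,y_1}_t-Y^{x,y_2}_t\|^2_{H_2}\leq-\gamma\,\tilde\EE\|Y^{x,y_1}_t-Y^{x,y_2}_t\|^2_{H_2},
\]
so Gronwall gives $\tilde\EE\|Y^{x,y_1}_t-Y^{x,y_2}_t\|^2_{H_2}\leq e^{-\gamma t}\|y_1-y_2\|^2_{H_2}$. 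By Cauchy--Schwarz, for every Lipschitz $\varphi$,
\[
|P^x_t\varphi(y_1)-P^x_t\varphi(y_2)|\leq\|\varphi\|_{Lip}\,\tilde\EE\|Y^{x,y_1}_t-Y^{x,y_2}_t\|_{H_2}\leq\|\varphi\|_{Lip}\,e^{-\gamma t/2}\|y_1-y_2\|_{H_2}.
\]
Equivalently, $P^x_t$ acts on the space $\mathcal P_1(H_2)$ of probability measures on $H_2$ with finite first moment as a strict contraction for the Wasserstein-$1$ distance, $\mathcal W_1(\nu_1P^x_t,\nu_2P^x_t)\leq e^{-\gamma t/2}\mathcal W_1(\nu_1,\nu_2)$ (the exponent in \eref{Ergodicity} is thus governed by the dissipativity constant $\gamma$).

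\emph{Step 2 (existence and uniqueness of $\mu^x$).} Repeating the computation of Lemma \ref{PMY} for \eref{FEQ1} (It\^o's formula, \eref{E3.3}, Young's and Gronwall's inequalities) gives $\sup_{t\geq0}\tilde\EE\|Y^{x,y}_t\|^2_{H_2}\leq C(1+\|x\|^2_{H_1}+\|y\|^2_{H_2})$, hence $P^x_t$ maps $\mathcal P_1(H_2)$ into itself. Since $H_2$ is a separable Hilbert space, $(\mathcal P_1(H_2),\mathcal W_1)$ is a complete metric space, so the Banach fixed point theorem applied to $P^x_1$ yields a unique $\mu^x\in\mathcal P_1(H_2)$ with $\mu^xP^x_1=\mu^x$. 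For any $t>0$ the measure $\mu^xP^x_t$ is again a fixed point of $P^x_1$ (because $P^x_1$ and $P^x_t$ commute), hence $\mu^xP^x_t=\mu^x$, so $\mu^x$ is invariant for the whole semigroup; and any invariant probability measure has finite first moment (again via the moment bound and a standard truncation/Lyapunov argument), so it is a fixed point of $P^x_1$ and therefore equals $\mu^x$.

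\emph{Step 3 (the quantitative estimate, and the main difficulty).} By invariance $\int_{H_2}\varphi\,d\mu^x=\int_{H_2}P^x_t\varphi(z)\,\mu^x(dz)$, so Step 1 yields
\[
\Big|P^x_t\varphi(y)-\int_{H_2}\varphi\,d\mu^x\Big|=\Big|\int_{H_2}\big(P^x_t\varphi(y)-P^x_t\varphi(z)\big)\mu^x(dz)\Big|\leq\|\varphi\|_{Lip}\,e^{-\gamma t/2}\int_{H_2}\|y-z\|_{H_2}\,\mu^x(dz),
\]
and it remains to control $\int_{H_2}\|z\|_{H_2}\mu^x(dz)$. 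For this I would use the moment bound of Step 2 with the reference initial datum $y=0$, together with invariance and Fatou's lemma, to get $\int_{H_2}\|z\|^2_{H_2}\mu^x(dz)\leq\liminf_{t\to\infty}\tilde\EE\|Y^{x,0}_t\|^2_{H_2}\leq C(1+\|x\|^2_{H_1})$, so that $\int_{H_2}\|y-z\|_{H_2}\mu^x(dz)\leq\|y\|_{H_2}+C(1+\|x\|_{H_1})\leq C(1+\|x\|_{H_1}+\|y\|_{H_2})$ and \eref{Ergodicity} follows. The genuinely delicate points are only (i) checking that the dissipativity constant $\gamma$ of \eref{dc} really survives the It\^o computation in Step 1, i.e. the diffusion term is absorbed with no loss (cf. the passage to $\hat\gamma$ in \eref{E3.3}, which one must track carefully), and (ii) obtaining the first-moment bound on $\mu^x$ via the reference datum $y=0$, since the naive estimate would be circular; the rest is soft.
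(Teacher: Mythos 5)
Your proof is correct and is essentially the same synchronous-coupling argument (dissipativity of \ref{B2} giving a Wasserstein-$1$ contraction, Banach fixed point for $\mu^x$, and a uniform second-moment bound for the invariant measure) that underlies \cite[Theorem~4.3.9]{LR1}, which is all the paper offers by way of proof. Your observation that the decay rate is really governed by the dissipativity constant $\gamma$ of \eref{dc} is also apt: the $\eta$ appearing in the exponent of \eref{Ergodicity} (and the stray $\beta$ in the later use of this estimate) is a notational slip in the paper, not a feature your argument misses.
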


\subsection{The averaged equation}

We consider the corresponding averaged equation, i.e.,
\begin{equation}\left\{\begin{array}{l}
\displaystyle d\bar{X}_{t}=A(\bar{X}_{t})dt+\bar{F}(\bar{X}_{t})dt+G_1(\bar{X}_{t})dW^{1}_t,\\
\bar{X}_{0}=x\in H_1,\end{array}\right. \label{AE}
\end{equation}
with the averaged coefficient
\begin{align*}
\bar{F}(x):=\int_{H_2}F(x,y)\mu^{x}(dy),\quad x\in {H_1},
\end{align*}
where $\mu^{x}$ is the unique invariant measure for the transition semigroup $\{P^{x}_t\}_{t\geq 0}$.

Since $F$ is Lipschitz continuous, it is easy to check $\bar{F}$ is also Lipschitz continuous, i.e.,
$$
\|\bar{F}(u)-\bar{F}(v)\|_{H_1}\leq C\|u-v\|_{H_1},\quad u,v\in H_1.
$$
Then equation $\eref{AE}$ has a unique variational solution $\bar X$ in the sense of Definition \ref{S.S.}, i.e.,for its $dt\otimes \PP$-equivalence class $\hat{\bar{X}}$ we have $\hat{\bar{X}}\in L^{\alpha}([0, T]\times \Omega, dt\otimes \PP; V_1)\cap L^2([0, T]\times\Omega, dt\otimes \PP; H_1)$ with $\alpha$ as in \ref{A3}, we have $\PP$-a.s.
\begin{align}
\bar{X}_{t}=x+\int_{0}^{t}A(\tilde{\bar{X}}_{s})ds+\int_{0}^{t}
\bar{F}(\bar{X}_{s})ds+\int_{0}^{t}G_1(\tilde{\bar{X}}_{s})dW^{1}_s,\label{4.6b}
\end{align}
where $\tilde{\bar{X}}$ is any $V_1$-valued progressively measurable $dt\otimes \PP$-version of $\hat{\bar{X}}$. Moreover, we also have the following estimates. Because their proofs follows almost the same steps in the proof of Lemmas \ref{PMY} and \ref{COX}, we omit them here.
\begin{lemma}\label{L3.8} For any $T>0$, $p\geq 1$, there exist constants $C_{p,T}>0$ and $m>0$ such that for any $x\in H_1$,
\begin{align*}
\mathbb{E}\left(\sup_{t\in[0,T]}\|\bar{X}_{t}\|^{2p}_{H_1}\right)+\EE\left(\int_0^T\|\bar{X}_{t}\|^{2p-2}_{H_1}\|\tilde{\bar{X}}_{t}\|_{V_1}^{\alpha}dt\right)\leq C_{p,T}(1+\|x\|^{2p}_{H_1})
\end{align*}
and
\begin{align}
\mathbb{E}\left[\int^{T}_0\|\bar{X}_{t}-\bar{X}_{t(\delta)}\|^2_{H_1} dt\right]\leq C_{T}\delta^{1/2}(1+\|x\|^m_{H_1}).\label{barXT}
\end{align}
\end{lemma}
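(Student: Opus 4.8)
The plan is to mimic the proofs of Lemmas~\ref{PMY} and~\ref{COX}, observing that the averaged equation~\eref{AE} has exactly the structure of the slow equation in~\eref{main equation} with the term $F(X^{\vare}_t,Y^{\vare}_t)$ replaced by $\bar{F}(\bar{X}_t)$. Since $\bar{F}$ is Lipschitz continuous (as noted just above the statement), it obeys the linear growth bound $\|\bar{F}(v)\|_{H_1}\le C(1+\|v\|_{H_1})$, and in particular there is no longer a fast component to be controlled, so the estimates are in fact simpler than those in Section~3.1.

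For the moment bound, I would apply It\^o's formula (as in \cite[Theorem 4.2.5]{LR1}) to $t\mapsto\|\bar{X}_t\|^2_{H_1}$ and then to $f(z)=z^p$ with $z_t=\|\bar{X}_t\|^2_{H_1}$, just as in the proof of Lemma~\ref{PMY}. The coercivity assumption~\ref{A3} produces the good term $-2p\theta\int_0^t\|\bar{X}_s\|^{2p-2}_{H_1}\|\tilde{\bar{X}}_s\|^{\alpha}_{V_1}\,ds$; the remaining contributions of $A$ and $G_1$ on the right-hand side are absorbed using~\ref{A3},~\ref{A4} and Young's inequality; the $\bar{F}$ term is dominated by $C\int_0^t\|\bar{X}_s\|^{2p-2}_{H_1}(1+\|\bar{X}_s\|^2_{H_1})\,ds$ by the linear growth of $\bar{F}$; and the stochastic integral is treated with the Burkholder--Davis--Gundy inequality followed by Young's inequality, moving a small multiple of $\mathbb{E}\sup_{s\in[0,t]}\|\bar{X}_s\|^{2p}_{H_1}$ to the left-hand side. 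Gronwall's inequality then gives the asserted bound with a constant depending only on $p$ and $T$.

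For the time-increment estimate I would repeat the argument of Lemma~\ref{COX}: decompose $\mathbb{E}\int_0^T\|\bar{X}_t-\bar{X}_{t(\delta)}\|^2_{H_1}\,dt$ into the contribution of $[0,\delta]$ (bounded by $C\delta(1+\|x\|^2_{H_1})$ via the moment bound) and the contributions of $\|\bar{X}_t-\bar{X}_{t-\delta}\|^2_{H_1}$ and $\|\bar{X}_{t(\delta)}-\bar{X}_{t-\delta}\|^2_{H_1}$ on $[\delta,T]$, exactly as in~\eref{F3.8}. Applying It\^o's formula to $\|\bar{X}_t-\bar{X}_{t-\delta}\|^2_{H_1}$ and splitting into four terms $I_1(t),\dots,I_4(t)$ as in~\eref{F3.9}, I would bound $I_1$ using the $\tfrac{\alpha}{\alpha-1}$-growth condition~\ref{A4}, H\"older's inequality with exponents $\tfrac{\alpha}{\alpha-1}$ and $\alpha$, Fubini's theorem, and the $L^{\alpha}([0,T]\times\Omega;V_1)$ control of $\tilde{\bar{X}}$ just obtained; $I_2$ via the linear growth of $\bar{F}$ together with the moment bound; $I_3$ via $\|G_1(\cdot)\|^2_{L_2(U_1,H_1)}\le C(1+\|\cdot\|^2_{H_1})$; and $I_4$ via the Burkholder--Davis--Gundy inequality, exactly as in~\eref{REGX4}. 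Each of these is $O\!\big(\delta^{1/2}(1+\|x\|^m_{H_1})\big)$ for a suitable $m$, which yields~\eref{barXT}.

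I do not anticipate a genuine obstacle, precisely because the averaged equation carries no fast variable and $\bar{F}$ inherits the Lipschitz (hence linear-growth) bound of $F$; the only point needing a little care is, as in Lemma~\ref{COX}, the bookkeeping in the estimate of $I_1$, where the $\tfrac{\alpha}{\alpha-1}$-growth of $A$ must be matched against the $L^{\alpha}$-in-time control of $\tilde{\bar{X}}$ in $V_1$ supplied by the first part of the lemma.
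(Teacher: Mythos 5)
Your proposal is correct and follows exactly the route the paper intends: the paper itself omits the proof of this lemma, stating only that it "follows almost the same steps" as Lemmas \ref{PMY} and \ref{COX}, which is precisely the reduction you carry out (replacing $F(X^{\vare}_t,Y^{\vare}_t)$ by the Lipschitz, hence linearly growing, $\bar F(\bar X_t)$ and dropping the fast component). No discrepancies.
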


\vskip 0.3cm
Next, we intend to prove that $X_{t}^{\vare}$ strongly converges to $\bar{X}_t$ for $t<\tau_{R}$ firstly, then the proof of the main result will follow from the fact that the difference process $X^{\vare}_t-\bar{X}_t$ after the stopping time is sufficient small when $R$ is large enough, whose proof is given is left in Subsection 3.5.
\begin{proposition} \label{ESX}
For any $(x,y)\in H_1\times H_2$, $T,R>0$ and $\vare\in(0,1)$, then there exist constants $C_{R,T}, m>0$ such that
\begin{align}
\mathbb{E}\left(\sup_{t\in[0, T\wedge \tau_R]}\|X_{t}^{\vare}-\bar{X}_{t}\|^2_{H_1}\right)\leq C_{R,T}(1+\|x\|_{H_1}^m+\|y\|^{m}_{H_1})\left(\frac{\vare}{\delta}+\frac{\vare^{1/2}}{\delta^{1/2}}+\delta^{1/2}+\delta^{1/4}\right),\label{CBS}
\end{align}
where
 $$\tau_R:=\inf\left\{t\geq 0:\int_0^t(1+\|\tilde{\bar{X}}_s\|_{V_1}^{\alpha})(1+\|\bar{X}_s\|_{H_1}^{\beta})ds\geq R\right\}.$$
\end{proposition}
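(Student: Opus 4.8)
The plan is to apply It\^o's formula to $\|X_t^\vare-\bar X_t\|_{H_1}^2$ on the interval $[0,T\wedge\tau_R]$ and to control the resulting terms using the local monotonicity condition \ref{A2} together with the time-discretization estimates already established. Writing $Z_t^\vare:=X_t^\vare-\bar X_t$, It\^o's formula (Theorem 4.2.5 in \cite{LR1}) gives
\begin{align*}
\|Z_t^\vare\|_{H_1}^2=\!\!\!\!\!\!\!\!&&2\int_0^t {_{V_1^*}}\langle A(\tilde X_s^\vare)-A(\tilde{\bar X}_s),\tilde X_s^\vare-\tilde{\bar X}_s\rangle_{V_1}ds+\int_0^t\|G_1(\tilde X_s^\vare)-G_1(\tilde{\bar X}_s)\|_{L_2(U_1,H_1)}^2 ds\\
\!\!\!\!\!\!\!\!&&+2\int_0^t\langle F(X_s^\vare,Y_s^\vare)-\bar F(\bar X_s),Z_s^\vare\rangle_{H_1}ds+2\int_0^t\langle Z_s^\vare,(G_1(\tilde X_s^\vare)-G_1(\tilde{\bar X}_s))dW_s^1\rangle_{H_1}.
\end{align*}
By \ref{A2} the sum of the first two integrands is bounded by $\rho(\tilde{\bar X}_s)\|Z_s^\vare\|_{H_1}^2$, and on $[0,\tau_R]$ the quantity $\int_0^t\rho(\tilde{\bar X}_s)\,ds$ is controlled by $CR$ because of the growth bound on $\rho$ and the definition of $\tau_R$. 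This is exactly why the stopping time is introduced: it turns the merely \emph{local} monotonicity into an effective Gronwall coefficient with a deterministic (in fact $R$-dependent) bound.

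The heart of the argument is the treatment of the drift term $\int_0^t\langle F(X_s^\vare,Y_s^\vare)-\bar F(\bar X_s),Z_s^\vare\rangle_{H_1}ds$. I would split it as
$\langle F(X_s^\vare,Y_s^\vare)-F(X_s^\vare,\hat Y_s^\vare),Z_s^\vare\rangle$
$+\langle F(X_s^\vare,\hat Y_s^\vare)-\bar F(X_{s(\delta)}^\vare),Z_s^\vare\rangle$
$+\langle \bar F(X_{s(\delta)}^\vare)-\bar F(\bar X_s),Z_s^\vare\rangle$. The first and third pieces are handled by the Lipschitz property of $F$ and $\bar F$ respectively, using Lemma \ref{DEY} (the $\delta^{1/2}$-estimate for $Y^\vare-\hat Y^\vare$) and Lemma \ref{COX} (the $\delta^{1/2}$-estimate for $X^\vare-X^\vare_{\cdot(\delta)}$), together with Young's inequality to absorb a $\|Z_s^\vare\|_{H_1}^2$ term. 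The genuinely delicate middle term is the averaging term: here one introduces the further decomposition by replacing $X_s^\vare$ with the frozen value $X_{s(\delta)}^\vare$ on each subinterval $[k\delta,(k+1)\delta)$, couples $\hat Y^\vare$ with the stationary solution of the frozen equation with parameter $X_{k\delta}^\vare$, and invokes the exponential ergodicity bound \eref{Ergodicity}. After a change of variables $s\mapsto s/\vare$ on each block this produces the $\vare/\delta$ and $(\vare/\delta)^{1/2}$ contributions. The main obstacle is carrying out this block-by-block comparison rigorously: one must condition on $\mathscr F_{k\delta}$, use that $X_{k\delta}^\vare$ is $\mathscr F_{k\delta}$-measurable so that \eref{Ergodicity} applies with $x=X_{k\delta}^\vare$, control the factor $(1+\|X_{k\delta}^\vare\|_{H_1}+\|\hat Y_{k\delta}^\vare\|_{H_2})$ by the moment bounds of Lemmas \ref{PMY} and \ref{DEY}, and sum the geometric-type series over $k$; the Lipschitz-in-$x$ dependence of $\bar F$ and of the solution map (needed to pass from $P^{X_{k\delta}^\vare}_{\cdot}$-averages to $\bar F(X_{k\delta}^\vare)$) must also be quantified.

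Finally, the stochastic integral is estimated by Burkholder--Davis--Gundy and \ref{A4}: its supremum over $[0,T\wedge\tau_R]$ is bounded by $\tfrac12\EE\sup_{s\le t\wedge\tau_R}\|Z_s^\vare\|_{H_1}^2$ plus $C\,\EE\int_0^{t\wedge\tau_R}(1+\|X_s^\vare\|_{H_1}^2)\,ds$, the first being absorbed into the left side and the second bounded via Lemma \ref{PMY}. Collecting all contributions yields an inequality of the form
$$
\EE\Big(\sup_{s\in[0,t\wedge\tau_R]}\|Z_s^\vare\|_{H_1}^2\Big)\le C_{R,T}(1+\|x\|_{H_1}^m+\|y\|_{H_2}^m)\Big(\frac{\vare}{\delta}+\frac{\vare^{1/2}}{\delta^{1/2}}+\delta^{1/2}+\delta^{1/4}\Big)+C_R\int_0^t\EE\Big(\sup_{r\in[0,s\wedge\tau_R]}\|Z_r^\vare\|_{H_1}^2\Big)ds,
$$
and an application of Gronwall's inequality (with the $R$-dependent constant, which is legitimate precisely because we stopped at $\tau_R$) gives \eref{CBS}. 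I expect the ergodic/averaging middle term to be the main source of technical work; everything else is a fairly standard variational-approach estimate modulo bookkeeping of the polynomial moment factors.
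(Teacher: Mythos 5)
Your overall strategy is the paper's: It\^o's formula for $\|X^\vare_t-\bar X_t\|^2_{H_1}$, the local monotonicity \ref{A2} turned into a Gronwall coefficient through the stopping time $\tau_R$, a decomposition of the drift isolating the term $F(X^\vare_{s(\delta)},\hat Y^\vare_s)-\bar F(X^\vare_{s(\delta)})$, and a block-by-block conditioning on $\mathscr F_{k\delta}$ combined with the ergodicity bound \eref{Ergodicity} to produce the $\vare/\delta$ contributions. However, two steps as you have written them would fail.

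First, your estimate of the stochastic integral is wrong. After Burkholder--Davis--Gundy the quadratic variation involves $\|G_1(\tilde X^\vare_s)-G_1(\tilde{\bar X}_s)\|^2_{L_2(U_1,H_1)}$, and if you bound this by the growth condition \ref{A4} you end up with a remainder $C\,\EE\int_0^{T}(1+\|X^\vare_s\|^2_{H_1})\,ds$, which by Lemma \ref{PMY} is of order $1$ uniformly in $\vare$ and $\delta$ --- it does not vanish, so the resulting inequality $\EE\sup\|Z^\vare\|^2\le C+\cdots$ gives nothing. One must keep the \emph{difference} structure of the diffusion coefficients, bound $\|G_1(\tilde X^\vare_s)-G_1(\tilde{\bar X}_s)\|^2$ together with the drift difference via \ref{A2} by $\rho(\tilde{\bar X}_s)\|Z^\vare_s\|^2_{H_1}$, and absorb the resulting term into the Gronwall integral using the stopping time (this is a second place, besides the drift, where $\tau_R$ is indispensable).

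Second, in the key averaging term you pair $F(X^\vare_{s(\delta)},\hat Y^\vare_s)-\bar F(X^\vare_{s(\delta)})$ with $Z^\vare_s=X^\vare_s-\bar X_s$, but $Z^\vare_s$ is not $\mathscr F_{k\delta}$-measurable for $s\in(k\delta,(k+1)\delta)$, so the conditioning/Cauchy--Schwarz factorization (pull out $\|X^\vare_{k\delta}-\bar X_{k\delta}\|_{H_1}$, then estimate $\EE\|\int_0^{\delta/\vare}[F(X^\vare_{k\delta},\hat Y^\vare_{s\vare+k\delta})-\bar F(X^\vare_{k\delta})]\,ds\|^2_{H_1}$ by the correlation function and \eref{Ergodicity}) does not go through; replacing the supremum over the block by a crude bound destroys the $\max_k$ structure and produces a factor $T/\delta$. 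The fix, which is what the paper does, is to further split the test vector as $(X^\vare_s-X^\vare_{s(\delta)})-(\bar X_s-\bar X_{s(\delta)})$ plus $X^\vare_{s(\delta)}-\bar X_{s(\delta)}$; the cross term is then controlled by Lemma \ref{COX} and the analogous increment estimate \eref{barXT} for $\bar X$ via Cauchy--Schwarz, and this is precisely where the $\delta^{1/4}$ in \eref{CBS} comes from. Your final display contains $\delta^{1/4}$, but your decomposition as stated does not generate it. Also, no Lipschitz dependence of the solution map on the frozen parameter is needed: \eref{Ergodicity} is applied with the fixed (conditioned) value $x=X^\vare_{k\delta}(\omega)$ and the Lipschitz function $\varphi=F(x,\cdot)$, which is all the argument requires.
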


\begin{proof}
We will divide the proof into three steps.

\vspace{2mm}
\textbf{Step 1.} We note that
\begin{eqnarray*}
X_{t}^{\vare}-\bar{X}_{t}=\!\!\!\!\!\!\!\!&&\int^t_0 \left[A(\tilde{X}_s^\varepsilon)-A(\tilde{\bar{X}}_s)\right]ds
+\left[F(X_{s}^\varepsilon,Y_s^\varepsilon)-\bar{F}(\bar{X}_s)\right]ds+\left[G_1(\tilde{X}_s^\varepsilon)-G_1(\tilde{\bar{X}}_s)\right]dW_s^{1}.
\end{eqnarray*}
By It\^{o}'s formula, we have
\begin{eqnarray*}
\|X_{t}^{\vare}-\bar{X}_{t}\|^2_{H_1}
=\!\!\!\!\!\!\!\!&&2\int_0^t{_{V^{*}_1}}\langle A(\tilde{X}_s^\varepsilon)-A(\tilde{\bar{X}}_s), \tilde{X}_{s}^{\vare}-\tilde{\bar{X}}_{s}\rangle_{V_1} ds+\int_0^t\|G_1(\tilde{X}_s^\varepsilon)-G_1(\tilde{\bar{X}}_s)
\|_{L_{2}(U_1, H_1)}^2ds\nonumber\\
\!\!\!\!\!\!\!\!&&+2\int_0^t\left\langle\left[F(X_{s}^\varepsilon,Y_s^\varepsilon)-\bar{F}(\bar{X}_s)\right], X_{s}^{\vare}-\bar{X}_{s}\right\rangle_{H_1} ds\nonumber\\
\!\!\!\!\!\!\!\!&&+2\int_0^t\langle X_{s}^{\vare}-\bar{X}_{s}, [G_1(\tilde{X}_s^\varepsilon)-G_1(\tilde{\bar{X}}_s)]dW_s^{1}\rangle_{H_1}\nonumber\\
=\!\!\!\!\!\!\!\!&&2\int_0^t{_{V^{*}}}\langle A(\tilde{X}_s^\varepsilon)-A(\tilde{\bar{X}}_s), \tilde{X}_{s}^{\vare}-\tilde{\bar{X}}_{s}\rangle_{V} ds+\int_0^t\|G_1(\tilde{X}_s^\varepsilon)-G_1(\tilde{\bar{X}}_s)
\|_{L_{2}(U_1, H_1)}^2ds\nonumber\\
\!\!\!\!\!\!\!\!&&+2\int_0^t\left\langle\left[\bar{F}(X_{s}^\varepsilon)-\bar{F}(\bar{X}_s)\right], X_{s}^{\vare}-\bar{X}_{s}\right\rangle_{H_1} ds\nonumber\\
\!\!\!\!\!\!\!\!&&+2\int_0^t\left\langle\left[F(X_{s}^\varepsilon,Y_s^\varepsilon)-\bar{F}(X^{\vare}_s)-F(X_{s(\delta)}^\varepsilon,\hat{Y}_s^\varepsilon)+\bar{F}(X^{\vare}_{s(\delta)})\right], X_{s}^{\vare}-\bar{X}_{s}\right\rangle_{H_1} ds\nonumber\\
\!\!\!\!\!\!\!\!&&+2\int_0^t\left\langle\left[F(X_{s(\delta)}^\varepsilon,\hat{Y}_s^\varepsilon)-\bar{F}(X^{\vare}_{s(\delta)})\right], X_{s}^{\vare}-X_{s(\delta)}^{\vare}-\bar{X}_{s}+\bar{X}_{s(\delta)}\right\rangle_{H_1} ds\nonumber\\
\!\!\!\!\!\!\!\!&&+2\int_0^t\left\langle\left[F(X_{s(\delta)}^\varepsilon,\hat{Y}_s^\varepsilon)-\bar{F}(X^{\vare}_{s(\delta)})\right], X_{s(\delta)}^{\vare}-\bar{X}_{s(\delta)}\right\rangle_{H_1} ds\nonumber\\
\!\!\!\!\!\!\!\!&&+2\int_0^t\langle X_{s}^{\vare}-\bar{X}_{s}, [G_1(\tilde{X}_s^\varepsilon)-G_1(\tilde{\bar{X}}_s)]dW_s^{1}\rangle_{H_1}.\nonumber
\end{eqnarray*}
Then conditions \ref{A2} and \ref{A3} imply $\mathbb{P}$-a.s.,
\begin{eqnarray*}
\|X_{t}^{\vare}-\bar{X}_{t}\|^2_{H_1}\leq\!\!\!\!\!\!\!\!&&C\int_0^t\|X_{s}^{\vare}-\bar{X}_{s}\|^2_{H_1}(1+\|\tilde{\bar{X}}_s\|_{V_1}^{\alpha})(1+\|\bar{X}_s\|_{H_1}^{\beta})ds\nonumber\\
\!\!\!\!\!\!\!\!&&+C\int_0^t\|X_{s}^{\vare}-X_{s(\delta)}^{\vare}\|^2_{H_1}+\|Y^{\vare}_s-\hat{Y}_{s}\|^2_{H_2}+\|\bar{X}_s-\bar X_{s(\delta)}\|^2_{H_1}ds\nonumber\\
\!\!\!\!\!\!\!\!&&+C\left[\int_0^t\|F(X_{s(\delta)}^\varepsilon,\hat{Y}_s^\varepsilon)-\bar{F}(X^{\varepsilon}_{s(\delta)})\|^2_{H_1}ds\right]^{1/2}\!\!\left[\int^t_0 \|X^{\varepsilon}_{s}-X^{\varepsilon}_{s(\delta)}\|^2_{H_1}+\|\bar X_{s}-\bar X_{s(\delta)}\|^2_{H_1} ds\right]^{1/2}\nonumber\\
\!\!\!\!\!\!\!\!&&+2\int_0^t\left\langle\left[F(X_{s(\delta)}^\varepsilon,\hat{Y}_s^\varepsilon)-\bar{F}(X^{\vare}_{s(\delta)})\right], X_{s(\delta)}^{\vare}-\bar{X}_{s(\delta)}\right\rangle_{H_1} ds\nonumber\\
\!\!\!\!\!\!\!\!&&+2\int_0^t\langle X_{s}^{\vare}-\bar{X}_{s}, [G_1(X_s^\varepsilon)-G_1(\tilde{\bar{X}}_s)]dW_s^{1}\rangle_{H_1}.\nonumber
\end{eqnarray*}
Using Gronwall's inequality and the definition of the stopping time $\tau_{R}$ , we deduce that
\begin{eqnarray}
\sup_{t\in[0, T\wedge\tau_R]}\|X_{t}^{\vare}-\bar{X}_{t}\|^2_{H_1}
\leq\!\!\!\!\!\!\!\!&&C_{R,T}\Big[\int_0^{T}\|X_{s}^{\vare}-X_{s(\delta)}^{\vare}\|^2_{H_1}+\|Y^{\vare}_s-\hat{Y}_{s}\|^2_{H_2}+\|\bar{X}_s-\bar X_{s(\delta)}\|^2_{H_1}ds\nonumber\\
\!\!\!\!\!\!\!\!&&+\left[\int_0^{T}\!\!\left(1+\|X_{s(\delta)}^\varepsilon\|^2_{H_1}+\|\hat{Y}_s^\varepsilon\|^2_{H_2}\right)ds\right]^{1/2}\!\!\left[\int^t_0 \!\!\|X^{\varepsilon}_{s}-X^{\varepsilon}_{s(\delta)}\|^2_{H_1}+\|\bar X_{s}-\bar X_{s(\delta)}\|^2_{H_1} ds\right]^{1/2}\nonumber\\
\!\!\!\!\!\!\!\!&&+\sup_{t\in[0, T]}\left|\int_0^t\left\langle\left[F(X_{s(\delta)}^\varepsilon,\hat{Y}_s^\varepsilon)-\bar{F}(X^{\vare}_{s(\delta)})\right], X_{s(\delta)}^{\vare}-\bar{X}_{s(\delta)}\right\rangle_{H_1} ds\right|\nonumber\\
\!\!\!\!\!\!\!\!&&+\sup_{t\in[0, T]}\left|\int_0^t\langle X_{s}^{\vare}-\bar{X}_{s}, [G_1(X_s^\varepsilon)-G_1(\tilde{\bar{X}}_s)]dW_s^{1}\rangle_{H_1}\right|\Big].\nonumber
\end{eqnarray}
Applying Burkholder-Davis-Gundy inequality, estimates \eref{F3.7}, \eref{3.14} and \eref{barXT}, there exists $m>0$ such that
\begin{eqnarray*}
\mathbb{E}\left(\sup_{t\in[0, T\wedge\tau_R]}\|X_{t}^{\vare}-\bar{X}_{t}\|^2_{H_1}\right)
\leq\!\!\!\!\!\!\!\!&& C_{R,T}\left(1+\|x\|^m_{H_1}+\|y\|^m_{H_2}\right)\delta^{1/4}+\frac{1}{2}\mathbb{E}
\left(\sup_{t\in[0, T\wedge\tau_R]}\|X_{t}^{\vare}-\bar{X}_{t}\|^2_{H_1}\right)\\
\!\!\!\!\!\!\!\!&&+C_{R,T}\EE\sup_{t\in[0, T]}\left|\int_0^t\left\langle\left[F(X_{s(\delta)}^\varepsilon,\hat{Y}_s^\varepsilon)-\bar{F}(X^{\vare}_{s(\delta)})\right], X_{s(\delta)}^{\vare}-\bar{X}_{s(\delta)}\right\rangle_{H_1} ds\right|\nonumber\\
\!\!\!\!\!\!\!\!&&+C_{R,T}\mathbb{E}\left(\int_0^{T\wedge\tau_R}\|X_{t}^{\vare}-\bar{X}_{t}\|^2_{H_1}dt\right),  \nonumber
\end{eqnarray*}
which implies
\begin{eqnarray*}
\mathbb{E}\left(\sup_{t\in[0, T\wedge\tau_R]}\|X_{t}^{\vare}-\bar{X}_{t}\|^2_{H_1}\right)\leq\!\!\!\!\!\!\!\!&& C_{R,T}\left(1+\|x\|^m_{H_1}+\|y\|^m_{H_2}\right)\delta^{1/4}\\
\!\!\!\!\!\!\!\!&&+C_{R,T}\EE\sup_{t\in[0, T]}\left|\int_0^t\left\langle\left[F(X_{s(\delta)}^\varepsilon,\hat{Y}_s^\varepsilon)-\bar{F}(X^{\vare}_{s(\delta)})\right], X_{s(\delta)}^{\vare}-\bar{X}_{s(\delta)}\right\rangle_{H_1} ds\right|\nonumber\\
\!\!\!\!\!\!\!\!&&+C_{R,T}\int_0^T\mathbb{E}\left(\sup_{s\in[0, t\wedge\tau_R]}\|X_{s}^{\vare}-\bar{X}_{s}\|^2_{H_1}\right)dt.
\end{eqnarray*}
By Gronwall's inequality again, we finally get
\begin{eqnarray*}
\mathbb{E}\left(\sup_{t\in[0, T\wedge\tau_R]}\|X_{t}^{\vare}-\bar{X}_{t}\|^2_{H_1}\right)\leq\!\!\!\!\!\!\!\!&&C_{R,T}\left(1+\|x\|^m_{H_1}+\|y\|^m_{H_2}\right)\delta^{1/4}\\
\!\!\!\!\!\!\!\!&&+C_{R,T}\EE\sup_{t\in[0, T]}\left|\int_0^t\left\langle\left[F(X_{s(\delta)}^\varepsilon,\hat{Y}_s^\varepsilon)-\bar{F}(X^{\vare}_{s(\delta)})\right], X_{s(\delta)}^{\vare}-\bar{X}_{s(\delta)}\right\rangle_{H_1} ds\right|.
\end{eqnarray*}
Hence, the proof will be completed by the following estimate:
\begin{eqnarray}
&&\EE\sup_{t\in[0, T]}\left|\int_0^t\left\langle\left[F(X_{s(\delta)}^\varepsilon,\hat{Y}_s^\varepsilon)-\bar{F}(X^{\vare}_{s(\delta)})\right], X_{s(\delta)}^{\vare}-\bar{X}_{s(\delta)}\right\rangle_{H_1} ds\right|\nonumber\\
\leq\!\!\!\!\!\!\!\!&&C_T\left(1+\|x\|^2_{H_1}+\|y\|^2_{H_2}\right)\left(\frac{\vare}{\delta}+\frac{\vare^{1/2}}{\delta^{1/2}}+\delta^{1/2}\right),\label{S1}
\end{eqnarray}
whose proof will be given in Step 2.

\vspace{2mm}
\textbf{Step 2.} We note that
\begin{eqnarray}    \label{J2}
&&\left|\int_{0}^{t}\langle F(X_{s(\delta)}^{\vare},\hat{Y}_{s}^{\vare})-\bar{F}(X^{\vare}_{s(\delta)}), X_{s(\delta)}^{\vare}-\bar{X}_{s(\delta)}\rangle_{H_1} ds\right|\nonumber\\
=\!\!\!\!\!\!\!\!&&\Big|\sum_{k=0}^{[t/\delta]-1}
\int_{k\delta}^{(k+1)\delta}\langle F(X_{s(\delta)}^{\vare},\hat{Y}_{s}^{\vare})-\bar{F}(X^{\vare}_{s(\delta)}), X_{s(\delta)}^{\vare}-\bar{X}_{s(\delta)}\rangle_{H_1} ds\nonumber\\
\!\!\!\!\!\!\!\!&&+\int_{t(\delta)}^{t}\langle F(X_{s(\delta)}^{\vare},\hat{Y}_{s}^{\vare})-\bar{F}(X^{\vare}_{s(\delta)}), X_{s(\delta)}^{\vare}-\bar{X}_{s(\delta)}\rangle_{H_1} ds\Big|\nonumber\\
\leq\!\!\!\!\!\!\!\!&&\sum_{k=0}^{[t/\delta]-1}
\left|\int_{k\delta}^{(k+1)\delta}\langle F(X_{s(\delta)}^{\vare},\hat{Y}_{s}^{\vare})-\bar{F}(X^{\vare}_{s(\delta)}), X_{s(\delta)}^{\vare}-\bar{X}_{s(\delta)}\rangle_{H_1} ds\right|\nonumber\\
&&+\left|\int_{t(\delta)}^{t}\langle F(X_{s(\delta)}^{\vare},\hat{Y}_{s}^{\vare})-\bar{F}(X^{\vare}_{s(\delta)}), X_{s(\delta)}^{\vare}-\bar{X}_{s(\delta)}\rangle_{H_1} ds\right|\nonumber\\
:=\!\!\!\!\!\!\!\!&&J_1(t)+J_2(t).
\end{eqnarray}

For the term $J_2(t)$, it is easy to see
\begin{eqnarray}
\EE\left[\sup_{t\in [0, T]}J_2(t)\right]\leq\!\!\!\!\!\!\!\!&&C\left[\EE\sup_{t\in [0, T]}\|X^{\vare}_t-\bar{X}_{t}\|^2_{H_1}\right]^{1/2}\left[\EE\sup_{t\in[0,T]}\left|\int_{t(\delta)}^{t}(1+\|X^{\vare}_{s(\delta)}\|_{H_1}+\|\hat{Y}_{s}^{\vare}\|_{H_2})ds\right|^2\right]^{1/2}\nonumber\\
\leq\!\!\!\!\!\!\!\!&&C\left[\EE\sup_{t\in [0, T]}\|X^{\vare}_t-\bar{X}_{t}\|^2_{H_1}\right]^{1/2}\left[\EE\int_{0}^{T}(1+\|X^{\vare}_{s(\delta)}\|^2_{H_1}+\|\hat{Y}_{s}^{\vare}\|^2_{H_2})ds\right]^{1/2}\delta^{1/2}\nonumber\\
\leq\!\!\!\!\!\!\!\!&&C_{T}(\|x\|^{2}_{H_1}+\|y\|^{2}_{H_2}+1)\delta^{1/2}.
\end{eqnarray}

For the term $J_1(t)$, we have
\begin{eqnarray*}
\mathbb{E}\left[\sup_{t\in[0, T]}J_1(t)\right]\leq\!\!\!\!\!\!\!\!&&\mathbb{E}\sum_{k=0}^{[T/\delta]-1}
\left|\int_{k\delta}^{(k+1)\delta}\langle F(X_{k\delta}^{\vare},\hat{Y}_{s}^{\vare})-\bar{F}(X_{k\delta}^{\vare}), X_{k\delta}^{\vare}-\bar{X}_{k\delta}\rangle_{H_1} ds\right|\nonumber\\
\leq\!\!\!\!\!\!\!\!&&\frac{C_{T}}{\delta}\max_{0\leq k\leq[T/\delta]-1}\mathbb{E}\left|\int_{k\delta}^{(k+1)\delta}
\langle F(X_{k\delta}^{\vare},\hat{Y}_{s}^{\vare})-\bar{F}(X_{k\delta}^{\vare}), X_{k\delta}^{\vare}-\bar{X}_{k\delta}\rangle_{H_1} ds\right|\nonumber\\
\leq\!\!\!\!\!\!\!\!&&\frac{C_{T}\vare}{\delta}\!\max_{0\leq k\leq[T/\delta]-1}\!\left[\mathbb{E}\|X^{\vare}_{k\delta}-\bar X_{k\delta}\|^2_{H_1}\right]^{1/2}\!\!\left[\EE\left\|\int_{0}^{\frac{\delta}{\vare}}
 F(X_{k\delta}^{\vare},\hat{Y}_{s\vare+k\delta}^{\vare})-\bar{F}(X_{k\delta}^{\vare})ds\right\| ^2_{H_1}\right]^{1/2}\nonumber\\
\leq\!\!\!\!\!\!\!\!&&\frac{C_{T}(1+\|x\|_{H_1}+\|y\|_{H_2})\vare}{\delta}\max_{0\leq k\leq[T/\delta]-1}\left[\int_{0}^{\frac{\delta}{\vare}}
\int_{r}^{\frac{\delta}{\vare}}\Psi_{k}(s,r)dsdr\right]^{1/2},  \nonumber
\end{eqnarray*}
where for any $0\leq r\leq s\leq \frac{\delta}{\vare}$,
\begin{eqnarray*}
\Psi_{k}(s,r):=\!\!\!\!\!\!\!\!&&\mathbb{E}\left[
\langle F(X_{k\delta}^{\vare},\hat{Y}_{s\vare+k\delta}^{\vare})-\bar{F}(X_{k\delta}^{\vare}),
F(X_{k\delta}^{\vare},\hat{Y}_{r\vare+k\delta}^{\vare})-\bar{F}(X_{k\delta}^{\vare})\rangle_{H_1}\right],
\end{eqnarray*}
and
\begin{eqnarray}
\Psi_{k}(s,r)\leq\!\!\!\!\!\!\!\!&&C_{T}(\|x\|^{2}_{H_1}+\|y\|^{2}_{H_2}+1)e^{-\frac{(s-r)\eta}{2}},\label{S2}
\end{eqnarray}
whose proof will be presented in Step 3. Hence, we get
\begin{eqnarray*}
&&\EE\sup_{t\in[0, T]}\left|\int_0^t\left\langle\left[F(X_{s(\delta)}^\varepsilon,\hat{Y}_s^\varepsilon)-\bar{F}(X^{\vare}_{s(\delta)})\right], X_{s(\delta)}^{\vare}-\bar{X}_{s(\delta)}\right\rangle_{H_1} ds\right|\nonumber\\
\leq\!\!\!\!\!\!\!\!&&C_{T}(\|x\|^{2}_{H_1}+\|y\|^{2}_{H_2}+1)\frac{\vare}{\delta}
\left[\int_{0}^{\frac{\delta}{\vare}}\int_{r}^{\frac{\delta}{\vare}}e^{-\frac{(s-r)\beta}{2}}dsdr\right]^{1/2}+C_{T}(\|x\|^{2}_{H_1}+\|y\|^{2}_{H_2}+1)\delta^{1/2}  \nonumber\\
=\!\!\!\!\!\!\!\!&&C_{T}(\|x\|^{2}_{H_1}+\|y\|^{2}_{H_2}+1)\frac{\vare}{\delta}\Big(\frac{\delta}{\beta\vare}-\frac{1}{\beta^{2}}
+\frac{1}{\beta^2}e^{-\frac{\beta\delta}{\vare}}\Big)^{1/2} +C_{T}(\|x\|^{2}_{H_1}+\|y\|^{2}_{H_2}+1)\delta^{1/2}  \nonumber\\
\leq\!\!\!\!\!\!\!\!&&C_{T}(\|x\|^{2}_{H_1}+\|y\|^{2}_{H_2}+1)\left(\frac{\vare}{\delta}+\frac{\vare^{1/2}}{\delta^{1/2}}+\delta^{1/2}\right),
\end{eqnarray*}
which completes the proof of estimate \eref{S1}.

\vspace{2mm}
\textbf{Step 3.} For any $s>0$, and any $\mathscr{F}_s$-measurable $H_1$-valued random variable $X$ and $H_2$-valued random variable $Y$, we consider the following equation:
\begin{eqnarray*}
\left\{ \begin{aligned}
&dY_{t}=\frac{1}{\vare}B(X,Y_{t})dt+\frac{1}{\sqrt{\vare}}G_2(X,Y_t)dW_{t}^{2},\quad t\geq s,\\
&Y_{s}=Y,
\end{aligned} \right.
\end{eqnarray*}
which has a unique solution $\tilde{Y}^{\vare,s,X,Y}_t$. Then by the construction of $\hat{Y}_{t}^{\vare}$,
for any $k\in \mathbb{N}_{\ast}$ and $t\in[k\delta,(k+1)\delta]$ we have $\PP$-a.s.,
$$
\hat{Y}_{t}^{\vare}=\tilde Y^{\vare,k\delta,X_{k\delta}^{\vare},\hat{Y}_{k\delta}^{\vare}}_t,
$$
which implies
\begin{eqnarray*}
\Psi_{k}(s,r)=\!\!\!\!\!\!\!\!&&\mathbb{E}\left[
\langle F(X_{k\delta}^{\vare},\tilde {Y}^{\vare, k\delta,X_{k\delta}^{\vare}, \hat Y_{k\delta}^{\vare}}_{s\vare+k\delta})-\bar{F}(X_{k\delta}^{\vare}), F(X_{k\delta}^{\vare},\tilde{Y}^{\vare, k\delta,X_{k\delta}^{\vare}, \hat Y_{k\delta}^{\vare}}_{r\vare+k\delta})-\bar{F}(X_{k\delta}^{\vare})\rangle_{H_1}\right]\\
=\!\!\!\!\!\!\!\!&&\int_{\Omega}\mathbb{E}\left[
\langle F(X_{k\delta}^{\vare},\tilde {Y}^{\vare, k\delta,X_{k\delta}^{\vare}, \hat Y_{k\delta}^{\vare}}_{s\vare+k\delta})-\bar{F}(X_{k\delta}^{\vare}), F(X_{k\delta}^{\vare},\tilde{Y}^{\vare, k\delta,X_{k\delta}^{\vare}, \hat Y_{k\delta}^{\vare}}_{r\vare+k\delta})-\bar{F}(X_{k\delta}^{\vare})\rangle_{H_1}| \mathscr{F}_{k\delta}\right](\omega)\PP(d \omega)\\
=\!\!\!\!\!\!\!\!&&\int_{\Omega}\mathbb{E}\left[
\langle F(X_{k\delta}^{\vare},\tilde {Y}^{\vare, k\delta,X_{k\delta}^{\vare}(\omega), \hat Y_{k\delta}^{\vare}(\omega)}_{s\vare+k\delta})-\bar{F}(X_{k\delta}^{\vare}(\omega))\right., \\
&&\quad\quad\quad \left.F(X_{k\delta}^{\vare}(\omega),\tilde{Y}^{\vare, k\delta,X_{k\delta}^{\vare}(\omega), \hat Y_{k\delta}^{\vare}(\omega)}_{r\vare+k\delta})-\bar{F}(X_{k\delta}^{\vare}(\omega))\rangle_{H_1}\right]\PP(d \omega),
\end{eqnarray*}
where the last equality comes from the fact that $X_{k\delta}^{\vare}$ and $\hat Y_{k\delta}^{\vare}$ are $\mathscr{F}_{k\delta}$-measurable, and for any fixed $(x,y)\in H_1\times H_2$, $\{\tilde Y^{\vare, k\delta,x,y}_{s\vare+k\delta}\}_{s\geq 0}$ is independent of $\mathscr{F}_{k\delta}$.

By the definition of process $\tilde{Y}^{\vare,k\delta,x,y}_t$, for its $dt\otimes \PP$-equivalence class $\hat{\tilde{Y}}^{\vare,k\delta,x,y}$ we have $\hat{\tilde{Y}}^{\vare,k\delta,x,y}\in L^{\kappa}([k\delta, T]\times \Omega, dt\otimes \PP; V_2)\cap L^2([k\delta, T]\times\Omega, dt\otimes \PP; H_2)$ with $\kappa$ as in \ref{B3} and $\PP$-a.s.
\begin{eqnarray}
\tilde{Y}^{\vare,k\delta,x,y}_{s\vare+k\delta}=\!\!\!\!\!\!\!\!&&y+\frac{1}{\vare}\int^{s\vare+k\delta}_{k\delta} B(x,\tilde{\tilde{Y}}^{\vare,k\delta,x,y}_r)dr+\frac{1}{\sqrt{\vare}}\int^{s\vare+k\delta}_{k\delta} G_2(x,\tilde{\tilde{Y}}^{\vare,k\delta,x,y}_r)dW^2_r\nonumber\\
=\!\!\!\!\!\!\!\!&&y+\frac{1}{\vare}\int^{s\vare}_{0} B(x,\tilde{\tilde{Y}}^{\vare,k\delta,x,y}_{r+k\delta})dr+\frac{1}{\sqrt{\vare}}\int^{s\vare}_{0} G_2(x,\tilde{Y}^{\vare,k\delta,x,y}_{r+k\delta})dW^{2,k\delta}_r\nonumber\\
=\!\!\!\!\!\!\!\!&&y+\int^{s}_{0} B(x,\tilde{\tilde{Y}}^{\vare,k\delta,x,y}_{r\vare+k\delta})dr+\int^{s}_{0} G_2(x,\tilde{Y}^{\vare,k\delta,x,y}_{r\vare+k\delta})d\hat{W}^{2,k\delta}_r,\label{E3.12.1}
\end{eqnarray}
where $\tilde{\tilde{Y}}^{\vare,k\delta,x,y}$ is any $V_2$-valued progressively measurable $dt\otimes \PP$-version of $\hat{\tilde{Y}}^{\vare,k\delta,x,y}$, $\{W^{2, k\delta}_r:=W^2_{r+k\delta}-W^2_{k\delta}\}_{r\geq 0}$ and $\{\hat W^{2,k\delta}_t:=\frac{1}{\sqrt{\vare}}W^{2,k\delta}_{t\vare}\}_{t\geq 0}$.

The uniqueness of the solution of Eq. (\ref{E3.12.1}) and Eq. (\ref{SFE}) implies
that the distribution of $(\tilde Y^{\vare, k\delta, x,y}_{s\vare+k\delta})_{0\leq s\leq \delta/\vare}$
coincides with the distribution of $(Y_{s}^{x, y})_{0\leq s\leq \delta/\vare}$. Then by Proposition \ref{Ergodicity}, estimates \eref{F3.1} and \eref{3.13a}, we have
\begin{eqnarray*}
\Psi_{k}(s,r)=\!\!\!\!\!\!\!\!&&\int_{\Omega}\Big[\tilde{\mathbb{E}}
\big\langle F\left(X_{k\delta}^{\vare}(\omega),Y^{X_{k\delta}^{\vare}(\omega), \hat Y_{k\delta}^{\vare}(\omega)}_{s}\right)-\bar{F}(X_{k\delta}^{\vare}(\omega)),\nonumber\\
&&\quad\quad\quad F\left(X_{k\delta}^{\vare}(\omega),Y^{X_{k\delta}^{\vare}(\omega), \hat Y_{k\delta}^{\vare}(\omega)}_{r}\right)-\bar{F}(X_{k\delta}^{\vare}(\omega))\big\rangle_{H_1} \Big]\PP(d\omega)\nonumber\\
=\!\!\!\!\!\!\!\!&&\int_{\Omega}\int_{\tilde{\Omega}}\big\langle\tilde{\mathbb{E}}\Big[
 F\left(X_{k\delta}^{\vare}(\omega),Y^{X_{k\delta}^{\vare}(\omega),Y_{r}^{X_{k\delta}^{\vare}(\omega),\hat Y_{k\delta}^{\vare}(\omega)}(\tilde{\omega})}_{s-r}\right)-\bar{F}( X_{k\delta}^{\vare}(\omega))\Big],\nonumber\\
&&\quad\quad\quad F\left(X_{k\delta}^{\vare}(\omega),Y^{X_{k\delta}^{\vare}(\omega), \hat Y_{k\delta}^{\vare}(\omega)}_{r}(\tilde{\omega})\right)-\bar{F}( X_{k\delta}^{\vare}(\omega))\big\rangle_{H_1}\tilde{\PP}(d\tilde{\omega})\PP(d\omega)\nonumber\\
\leq\!\!\!\!\!\!\!\!&&\int_{\Omega}\int_{\tilde{\Omega}}\left[1+\|X_{k\delta}^{\vare}(\omega)\|_{H_1}+\|Y_{r}^{X_{k\delta}^{\vare}(\omega), \hat Y_{k\delta}^{\vare}(\omega)}(\tilde{\omega})\|_{H_2}\right]e^{-\frac{(s-r)\eta}{2}}\nonumber\\
&&\quad\cdot\left[(1+\|X_{k\delta}^{\vare}(\omega)\|_{H_1}+\|Y_{r}^{X_{k\delta}^{\vare}(\omega), \hat Y_{k\delta}^{\vare}(\omega)}(\tilde{\omega})\|_{H_2}\right]\tilde{\PP}(d\tilde{\omega})\PP(d\omega)\nonumber\\
\leq\!\!\!\!\!\!\!\!&&C_T\int_{\Omega}\left[(1+\|X^{\vare}_{k\delta}(\omega)\|^{2}_{H_1}+\|\hat Y_{k\delta}^{\vare}(\omega)\|^{2}_{H_2})\right]\PP(d\omega)e^{-\frac{(s-r)\beta}{2}}\\
\leq\!\!\!\!\!\!\!\!&&C_{T}(\|x\|^{2}_{H_1}+\|y\|^{2}_{H_2}+1)e^{-\frac{(s-r)\eta}{2}},
\end{eqnarray*}
which gives  estimate \eref{S2}. The proof is complete.
\end{proof}

\subsection{Proof of Theorem \ref{main result 1}:}
By Chebyshev's inequality, Lemmas \ref{PMY} and \ref{L3.8}, we have
\begin{eqnarray}
\mathbb{E}\left(\sup_{t\in [0, T]}\|X_{t}^{\vare}-\bar{X}_{t}\|^2_{H_1} 1_{\{T>\tau_{R}\}}\right)\leq\!\!\!\!\!\!\!\!&&\left[\mathbb{E}\left(\sup_{t\in [0, T]}\|X_{t}^{\vare}-\bar{X}_{t}\|^{4}_{H_1}\right)\right]^{1/2}
\cdot\left[\mathbb{P}\left(T>\tau_{R}\right)\right]^{1/2} \nonumber\\
\leq\!\!\!\!\!\!\!\!&& \frac{C_{T}(1+\|x\|^2_{H_1}+\|y\|^{2}_{H_2})}{\sqrt{R}}\times\left[\mathbb{E}\int_0^T(1+\|\tilde{\bar{X}}_s\|_{V_1}^{\alpha})(1+\|\bar{X}_s\|_{H_1}^{\beta})ds\right.\nonumber\\
&&\left.+\EE\int_0^T(1+\|\tilde{X}_s^\varepsilon\|_{V_1}^{\alpha})(1+\|X_s^\varepsilon\|_{H_1}^{\beta})ds\right]\nonumber\\
\leq\!\!\!\!\!\!\!\!&&\frac{C_{T}(1+\|x\|^m_{H_1}+\|y\|^{m}_{H_2})}{\sqrt{R}}, \label{CAS}
\end{eqnarray}
where $m$ is a positive constant. Then taking $\delta=\vare^{\frac{2}{3}}$,  estimates \eref{CBS} and \eref{CAS} give
\begin{eqnarray*}
\mathbb{E}\left(\sup_{t\in [0, T]}\|X_{t}^{\vare}-\bar{X}_{t}\|^2_{H_1}\right)\leq\!\!\!\!\!\!\!\!&&\mathbb{E}\left(\sup_{t\in [0, T]}\|X^{\vare}_{t}-\bar{X}_{t}\|^2_{H_1} 1_{\{T\leq \tau_{R}\}}\right)+\mathbb{E}\left(\sup_{t\in [0, T]}\|X_{t}^{\vare}-\bar{X}_{t}\|^2_{H_1} 1_{\{T>\tau_{R}\}}\right)\nonumber\\
\leq\!\!\!\!\!\!\!\!&& C_{R,T}(1+\|x\|^m_{H_1}+\|y\|^{m}_{H_2}) \vare^{\frac{1}{6}}+\frac{C_{T}(1+\|x\|^m_{H_1}+\|y\|^{m}_{H_2})}{\sqrt{R}}.
\end{eqnarray*}
Now, letting $\vare\rightarrow 0$ first, then $R\rightarrow \infty$, we have
\begin{eqnarray}
\lim_{\vare\rightarrow 0}\mathbb{E}\left(\sup_{t\in [0, T]}\|X_{t}^{\vare}-\bar{X}_{t}\|^2_{H_1}\right)=0.\label{L2}
\end{eqnarray}

Note that for any $p>1$, by Lemmas \ref{PMY} and \ref{L3.8}, it is easy to see that
\begin{eqnarray*}
\EE\left(\sup_{t\in [0, T]}\|X_{t}^{\vare}-\bar{X}_{t}\|^{2p}_{H_1}\right)=\!\!\!\!\!\!\!\!&& \EE\left[\sup_{t\in [0, T]}\left(\|X_{t}^{\vare}-\bar{X}_{t}\|_{H_1} \|X_{t}^{\vare}-\bar{X}_{t}\|^{2p-1}_{H_1}\right)\right]\\
\leq\!\!\!\!\!\!\!\!&&C_p\left[\EE\left(\sup_{t\in [0, T]}\|X_{t}^{\vare}-\bar{X}_{t}\|^{2}_{H_1}\right)\right]^{1/2}\left[\EE\left(\sup_{t\in [0, T]}\|X_{t}^{\vare}-\bar{X}_{t}\|^{4p-2}_{H_1}\right)\right]^{1/2}\\
\leq\!\!\!\!\!\!\!\!&&C_{p, T}(1+\|x\|^{2p-1}_{H_1}+\|y\|^{2p-1}_{H_2})\left[\EE\left(\sup_{t\in [0, T]}\|X_{t}^{\vare}-\bar{X}_{t}\|^{2}_{H_1}\right)\right]^{1/2}
\end{eqnarray*}
Hence, by \eref{L2}, we finally get
$$
\lim_{\vare\rightarrow 0}\EE\left(\sup_{t\in [0, T]}\|X_{t}^{\vare}-\bar{X}_{t}\|^{2p}_{H_1}\right)=0.
$$
The proof is complete.\hspace{\fill}$\Box$

\section{Application to examples}\label{Sec Exs}

In this section we will apply our main result to establish the averaging principle for stochastic porous medium equations, $p$-Laplace equations, Burgers  equations and 2D Navier-Stokes equations with slow and fast time-scales. Note that we here mainly focus on the nonlinear operator $A$, so we take the stochastic porous medium equation, $p$-Laplace equations, Burgers equations, or 2D Navier-Stokes equations for the slow component and stochastic heat equation with Lipschitz drift for the fast component for the simplicity.

Let $\Lambda\subset\RR^d$ be an open bounded domain and $\Delta$ be the Laplace operator on $\Lambda$ with Dirichlet boundary conditions, and
for $p\in[1, \infty)$ we use $L^p(\Lambda)$ and $H^{n,p}_0$ to denote the space of $p$-Lebesgue integrable functions on $\Lambda$ and the
Sobolev space of order $n$ in $L^p(\Lambda)$ with Dirichlet boundary conditions. Recall that $X^{*}$ denotes the dual space of a Banach space $X$.

\subsection{Stochastic porous medium equations}

Let $\Psi:\RR\rightarrow \RR$ be a function having the following properties :\\
$(\Psi1)$ $\Psi$ is continuous.\\
$(\Psi2)$ For all $s, t\in\RR$
$$
(t-s)(\Psi(t)-\Psi(s))\geq 0.
$$
$(\Psi3)$ There exist $p\in [2,\infty), c_1\in (0, \infty), c_2\in [0, \infty)$ such that for all $s\in\RR$
$$
s\Psi(s)\geq c_1|s|^p-c_2.
$$
$(\Psi4)$ There exist $c_3,c_4\in (0, \infty)$ such that for all $s\in\RR$
$$
|\Psi(s)|\leq c_3|s|^{p-1}+c_4,
$$
where $p$ is as in $(\Psi3)$.

Considering the Gelfand triple for the slow equation
$$
V_1:=L^p(\Lambda)\subseteq H_1:=(H^{1,2}_0(\Lambda))^{*}\subseteq V_1^{*}:=(L^p(\Lambda))^{*}
$$
and the Gelfand triple for the fast equation
$$
V_2:=H^{1,2}_0(\Lambda)\subseteq H_2:=L^2(\Lambda)\subseteq V_2^{*}:=(H^{1,2}_0(\Lambda))^{*}.
$$
Then we introduce the porous medium operator $A(u): V_1\rightarrow V_1^{*}$ by
$$
A(u)=\Delta\Psi(u),\quad u\in V_1
$$
(see \cite[Section 4.1]{LR1} for details).

Now, we consider the slow-fast stochastic porous medium-heat equations
\begin{equation}\left\{\begin{array}{l}\label{Ex1}
\displaystyle
dX^{\varepsilon}_t=\left[\Delta\Psi(X^{\varepsilon}_t)+F(X^{\varepsilon}_t, Y^{\varepsilon}_t)\right]dt
+G_1(X^{\varepsilon}_t)d W^{1}_{t},\\
\displaystyle
dY^{\varepsilon}_t=\frac{1}{\varepsilon}\left[\Delta Y^{\varepsilon}_t+B_2(X^{\varepsilon}_t,Y^{\varepsilon}_t)\right]dt
+\frac{1}{\sqrt{\varepsilon}}G_2(X^{\varepsilon}_t, Y^{\varepsilon}_t)d W^{2}_{t},\\
X^{\varepsilon}_0=x\in H_1, Y^{\varepsilon}_0=y\in H_2,\end{array}\right.
\end{equation}
where
$$
F:H_1\times H_2\to H_1; \quad G_1: V_1\to L_{2}(U_1; H_1);
$$
are measurable mappings and
$$
B_2: H_1\times V_2\to V^{*}_2; \quad G_2:H_1\times V_2\to L_{2}(U_2; H_2)
$$
are Lipschitz continuous. More precisely,
\begin{eqnarray}
&&\|F(u_1,u_2)-F(v_1,v_2)\|_{H_1}\leq C(\|u_1-v_1\|_{H_1}+\|u_2-v_2\|_{H_2});\label{E4.2}\\
&&\|G_1(u)-G_1(v)\|^2_{L_2(U_1,H_1)}\leq C\|u-v\|^2_{H_1};\label{E4.3}\\
&&\|B_2(u_1,u_2)-B_2(v_1,v_2)\|_{H_2}\leq C\|u_1-v_1\|_{H_1}+L_{B_2}\|u_2-v_2\|_{H_2};\label{E4.4}\\
&&\|G_2(u_1,u)-G_2(v_1,v)\|_{L_2(U_2,H_2)}\leq C\|u_1-v_1\|_{H_1}+L_{G_2}\|u-v\|_{H_2}.\label{E4.5}
\end{eqnarray}
Moreover, there exists $\zeta\in (0,1)$ such that
\begin{eqnarray}
\|G_2(u_1,v)\|_{L_2(U_2,H_2)}\leq C(1+\|u_1\|_{H_1}+\|v\|^{\zeta}_{H_2})\label{E4.6}
\end{eqnarray}
and for the smallest eigenvalue $\lambda_1$ of $-\Delta$ in $H_2$, the Lipschitz constants $L_{B_2}$, $L_{G_2}$ satisfy
\begin{eqnarray}
2\lambda_{1}-2L_{g}-L^2_{\sigma_2}>0.\label{SEx1}
\end{eqnarray}

It is well known that the porous medium operator $A$ satisfies the monotonicity and coercivity properties (see, e.g., \cite[Pages 87-88]{LR1}). So it is easy to check all the assumptions \ref{A1}-\ref{A4}
hold. Furthermore, the assumption \ref{B2} holds by condition \eref{SEx1} and the assumptions \ref{B1},\ref{B3} and \ref{B4} hold obviously. Hence, by Theorem \ref{main result 1}, we have
\begin{eqnarray*}
\lim_{\vare\rightarrow 0}\mathbb{E} \left(\sup_{t\in[0,T]}\|X_{t}^{\vare}-\bar{X}_{t}\|^{2p}_{H_1} \right)=0,\quad \forall p\geq 1,
\end{eqnarray*}
where $\bar{X}_t$ is the solution of the corresponding averaged equation.

\subsection{Stochastic $p$-Laplace equation} Now we consider the stochastic $p$-Laplace equation $(p\geq 2)$. We choose the Gelfand triple for the slow equation
$$
V_1:=H^{1,p}_0(\Lambda)\subseteq H_1:=L^2(\Lambda)\subseteq V_1^{*}:=(H^{1,p}_0(\Lambda))^{*}
$$
and the Gelfand triple for the fast equation
$$
V_2:=H^{1,2}_0(\Lambda)\subseteq H_2:=L^2(\Lambda)\subseteq V_2^{*}:=(H^{1,2}_0(\Lambda))^{*}
$$
and let $A: V_1\rightarrow V^{*}_1$ be
$$
A(u):=\text{div} (|\nabla u|^{p-2}\nabla u),\quad u\in V_1.
$$
More precisely, given $u\in V_1$, we define
$$
{_{V_1^{*}}}\langle A(u), v\rangle_{V_1}:=-\int_{\Lambda}|\nabla u(\xi)|^{p-2}\langle \nabla u(\xi),\nabla v(\xi)\rangle d\xi, \quad v\in V_1.
$$
Here $A$ is called the $p$-\text{Laplace} operator, also denoted by $\Delta_p$. Note that $\Delta_2=\Delta$.

Now, we consider the slow-fast stochastic $p$-Laplace-heat equations
\begin{equation}\left\{\begin{array}{l}\label{Ex2}
\displaystyle
dX^{\varepsilon}_t=\left[\text{div} (|\nabla X^{\varepsilon}_t|^{p-2}\nabla X^{\varepsilon}_t)+F(X^{\varepsilon}_t, Y^{\varepsilon}_t)\right]dt
+G_1(X^{\varepsilon}_t)d W^{1}_{t},\\
\displaystyle
dY^{\varepsilon}_t=\frac{1}{\varepsilon}\left[\Delta Y^{\varepsilon}_t +B_2(X^{\varepsilon}_t,Y^{\varepsilon}_t)\right]dt
+\frac{1}{\sqrt{\varepsilon}}G_2(X^{\varepsilon}_t, Y^{\varepsilon}_t)d W^{2}_{t},\\
X^{\varepsilon}_0=x\in H_1, Y^{\varepsilon}_0=y\in H_2,\end{array}\right.
\end{equation}
where the coefficients $F, G_1, B_2$ and $G_2$ satisfy conditions \eref{E4.2}-\eref{SEx1}.

It is well known that the $p$-Laplace operator satisfies the monotonicity and coercivity properties (see, e.g.,\cite[Example 4.1.9]{LR1}). So it is easy to check that all the assumptions \ref{A1}-\ref{A4} and \ref{B1}-\ref{B4} hold. Hence, by Theorem \ref{main result 1}, we have
\begin{eqnarray*}
\lim_{\vare\rightarrow 0}\mathbb{E} \left(\sup_{t\in[0,T]}\|X_{t}^{\vare}-\bar{X}_{t}\|^{2p}_{H_1} \right)=0,\quad \forall p\geq 1,
\end{eqnarray*}
where $\bar{X}_t$ is the solution of the corresponding averaged equation.

\vspace{0.1cm}

Note that in the above two examples both the porous medium and the $p$-Laplace operators are globally monotone. But our result also applies to many merely locally monotone operators. For illustration, we will consider the stochastic Burgers and stochastic 2D Naiver-Stokes equation below (see \cite[Section 4.1 and 5.1]{LR1} for a number of other examples).

\subsection{Stochastic Burgers equation} Now we consider the stochastic Burgers equation. Taking $\Lambda=(0,1)\subset \RR$, we choose the Gelfand triple for the slow equation
$$
V_1:=H^{1,2}_0(\Lambda)\subseteq H_1:=L^2(\Lambda)\subseteq V_1^{*}:=(H^{1,2}_0(\Lambda))^{*}
$$
and the Gelfand triple for the fast equation
$$
V_2:=H^{1,2}_0(\Lambda)\subseteq H_2:=L^2(\Lambda)\subseteq V_2^{*}:=(H^{1,2}_0(\Lambda))^{*}
$$
and let $A: V_1\rightarrow V^{*}_1$ be
$$
A(u):=\Delta u+u\nabla u,\quad u\in V_1.
$$

Now, we consider the slow-fast stochastic Burgers-heat equations
\begin{equation}\left\{\begin{array}{l}\label{Ex3}
\displaystyle
dX^{\varepsilon}_t=\left[\Delta X^{\varepsilon}_t+X^{\varepsilon}_t\nabla X^{\varepsilon}_t+F(X^{\varepsilon}_t, Y^{\varepsilon}_t)\right]dt
+G_1(X^{\varepsilon}_t)d W^{1}_{t},\\
\displaystyle
dY^{\varepsilon}_t=\frac{1}{\varepsilon}\left[\Delta Y^{\varepsilon}_t +B_2(X^{\varepsilon}_t,Y^{\varepsilon}_t)\right]dt
+\frac{1}{\sqrt{\varepsilon}}G_2(X^{\varepsilon}_t, Y^{\varepsilon}_t)d W^{2}_{t},\\
X^{\varepsilon}_0=x\in H_1, Y^{\varepsilon}_0=y\in H_2,\end{array}\right.
\end{equation}
where the coefficients $F, G_1, B_2$ and $G_2$ satisfy conditions \eref{E4.2}-\eref{SEx1}.

It is well known that the operator $A$ satisfies the monotonicity and coercivity properties (see, e.g.,\cite[Lemma 5.1.6]{LR1}). So it is easy to check that all the assumptions \ref{A1}-\ref{A4} and \ref{B1}-\ref{B4} hold. Hence, by Theorem \ref{main result 1}, we have
\begin{eqnarray*}
\lim_{\vare\rightarrow 0}\mathbb{E} \left(\sup_{t\in[0,T]}\|X_{t}^{\vare}-\bar{X}_{t}\|^{2p}_{H_1} \right)=0,\quad \forall p\geq 1,
\end{eqnarray*}
where $\bar{X}_t$ is the solution of the corresponding averaged equation.

\subsection{Stochastic 2D Navier-Stokes equation} Now we consider the stochastic 2D Navier-Stokes equation. Let $\Lambda$ be a bounded
domain in $\RR^2$ with smooth boundary, denote
$$
V_1=\{v\in H^{1,2}_0(\Lambda;\RR^2): \nabla\cdot v=0~a.s. \text{in}~\Lambda\},\quad \|v\|^2_{V_1}:=\int_{\Lambda} |\nabla v(\xi)|^2d\xi,
$$
and let $H_1$ be the closure of $V_1$ in $L^2(\Lambda;\RR^2)$. We choose the Gelfand triple for the slow equation
$$
V_1\subseteq H_1\subseteq V_1^{*}
$$
and the Gelfand triple for the fast equation
$$
V_2:=H^{1,2}_0(\Lambda)\subseteq H_2:=L^2(\Lambda)\subseteq V_2^{*}:=(H^{1,2}_0(\Lambda))^{*}
$$
and let $A: V_1\rightarrow V^{*}_1$ be
$$
A(u):=P_H\Delta u-P_H[(u\cdot\nabla)u],\quad u\in V_1,
$$
where $P_H$ is the Helmholtz-Leray projection and $u\cdot\nabla=\sum^2_{i=1}u^i\partial_i$ with $u=(u^1,u^2)$.

Now, we consider the slow-fast stochastic 2D Navier-Stokes-heat equation
\begin{equation}\left\{\begin{array}{l}\label{Ex4}
\displaystyle
dX^{\varepsilon}_t=\left[A(X^{\varepsilon}_t)+F(X^{\varepsilon}_t, Y^{\varepsilon}_t)\right]dt
+G_1(X^{\varepsilon}_t)d W^{1}_{t},\\
\displaystyle
dY^{\varepsilon}_t=\frac{1}{\varepsilon}\left[\Delta Y^{\varepsilon}_t +B_2(X^{\varepsilon}_t,Y^{\varepsilon}_t)\right]dt
+\frac{1}{\sqrt{\varepsilon}}G_2(X^{\varepsilon}_t, Y^{\varepsilon}_t)d W^{2}_{t},\\
X^{\varepsilon}_0=x\in H_1, Y^{\varepsilon}_0=y\in H_2,\end{array}\right.
\end{equation}
where the coefficients $F, G_1, B_2$ and $G_2$ satisfy conditions \eref{E4.2}-\eref{SEx1}.

It is well known that the operator $A$ satisfies the monotonicity and coercivity properties (see, e.g.,\cite[Example 5.1.10]{LR1}). So it is easy to check that all the assumptions \ref{A1}-\ref{A4} and \ref{B1}-\ref{B4} hold. Hence, by Theorem \ref{main result 1}, we have
\begin{eqnarray*}
\lim_{\vare\rightarrow 0}\mathbb{E} \left(\sup_{t\in[0,T]}\|X_{t}^{\vare}-\bar{X}_{t}\|^{2p}_{H_1} \right)=0,\quad \forall p\geq 1,
\end{eqnarray*}
where $\bar{X}_t$ is the solution of the corresponding averaged equation.

\section{Appendix} \label{Sec appendix}
At the end of this section, we give the proof of Theorem \ref{Th1} based on the techniques used in \cite[Theorem 5.1.3]{LR1}.

\medskip
\noindent
\textbf{Proof of Theorem \ref{Th1}:}
Let $\mathcal{H}:={H_1}\times H_2$ be the product Hilbert space. For any $\phi=(\phi_1,\phi_2),\varphi=(\varphi_1,\varphi_2)\in\mathcal{H}$, we denote the scalar product and the induced norm by
\begin{eqnarray*}
\langle\phi,\varphi\rangle_{\mathcal{H}}=\langle \phi_1, \varphi_1\rangle_{H_1}+\langle\phi_2, \varphi_2\rangle_{H_2},~~
\|\phi\|_{\mathcal{H}}=\sqrt{\langle\phi,\phi\rangle_{\mathcal{H}}}=\sqrt{\|\phi_1\|^2_{H_1}+\|\phi_2\|^2_{H_2}}.
\end{eqnarray*}
Similarly, we also define $\mathcal{V}:=V_1\times V_2$. Then $\mathcal{V}$ is a reflexive Banach space with the following norm:
\begin{eqnarray*}
\|\phi\|_{\mathcal{V}}=\sqrt{\|\phi_1\|_{V_1}^2+\|\phi_2\|_{V_2}^2}.
\end{eqnarray*}
Now we rewrite the system \eref{main equation} for $Z^{\varepsilon}_t=(X^{\varepsilon}_t,Y^{\varepsilon}_t)$ as
\begin{eqnarray}
dZ^{\varepsilon}_t=\tilde{A}(Z^{\varepsilon}_t)dt+G(Z^{\varepsilon}_t)dW_t,\quad Z^{\varepsilon}_0=(x,y)\in \mathcal{H},\label{E4.1}
\end{eqnarray}
where $W_t:=(W_t^{1},W_t^{2})$, which is a $U_1\times U_2$-valued cylindrical-Wiener process and
\begin{eqnarray*}
&&\tilde{A}(Z^{\varepsilon}_t)=\left(A(X^{\varepsilon}_t)+F(X^{\varepsilon}_t,Y^{\varepsilon}_t),\frac{1}{\varepsilon}B(X^{\varepsilon}_t,Y^{\varepsilon}_t)\right),
\\&&G(Z^{\varepsilon}_t)=\left(G_1(X^{\varepsilon}_t),\frac{1}{\sqrt{\varepsilon}}G_2(X^{\varepsilon}_t,Y^{\varepsilon}_t)\right).
\end{eqnarray*}
Moreover, $G$ is an operator from $\mathcal{V}$ to $L_{2}(\mathcal{U},\mathcal{H})$, where $\mathcal{U}:=U_1\times U_2$ and $L_{2}(\mathcal{U},\mathcal{H})$ is the space of Hilbert-Schmidt operators from $\mathcal{U}$ to $\mathcal{H}$. The norm in $L_{2}(\mathcal{U},\mathcal{H})$ is defined by
$$\|G(z)\|_{L_{2}(\mathcal{U},\mathcal{H})}=\sqrt{\|G_1(x)\|_{L_{2}(U_1,H_1)}^2+\frac{1}{\varepsilon}\|G_2(x, y)\|_{L_{2}(U_2,H_2)}^2},\quad z=(x,y)\in\mathcal{V}.$$

Let $\mathcal{V}^{*}$ be the dual space of $\mathcal{V}$ and we consider the following Gelfand triple $\mathcal{V}\subset\mathcal{H}\equiv\mathcal{H}^{*}\subset\mathcal{V}^{*}$.
It is easy to see that the following mappings
$$
\tilde{A}:\mathcal{V}\rightarrow\mathcal{V}^{*},\quad G:\mathcal{V}\rightarrow L_{2}(\mathcal{U},\mathcal{H})
$$
are well defined. To complete the proof, we only check whether the new coefficients in equation \eref{E4.1} satisfy the local monotonicity, coercivity and growth properties by \cite[Theorem 5.1.3]{LR1}.

Indeed, for any $w_1=(u_1,v_1),w_2=(u_2,v_2)\in\mathcal{V}$, by conditions \ref{A2} and \ref{B2}, we have
\begin{eqnarray*}
&&{_{\mathcal{V}^{*}}}\langle \tilde{A}(w_1)-\tilde{A}(w_2),w_1-w_2\rangle_{\mathcal{V}}+\|G(w_1)-G(w_2)\|^2_{L_{2}(\mathcal{U},\mathcal{H})}\\
=\!\!\!\!\!\!\!\!&&{_{V^{*}_1}}\langle A(u_1)-A(u_2),u_1-u_2\rangle_{V_1}+\langle F(u_1,v_1)-F(u_2,v_2),u_1-u_2\rangle_{H_1}\\
&&+\frac{1}{\varepsilon}{_{V^{*}_2}}\langle B(u_1,v_1)-B(u_2,v_2),v_1-v_2\rangle_{V_2}+\|G_1(u_1)-G_1(u_2)\|^2_{L_{2}(U_1, H_1)}\\
&&+\frac{1}{\vare}\|G_2(u_1,v_1)-G_2(u_2,v_2)\|^2_{L_{2}(U_2,H_2)}\\
\leq\!\!\!\!\!\!\!\!&&C\left[(1+\|u_2\|^{\alpha}_{V_1})(1+\|u_2\|^{\beta}_{H_1})\right]\|u_1-u_2\|^2_{H_1}-\frac{\gamma}{2\vare}\|v_1-v_2\|^2_{H_2}\\
\leq\!\!\!\!\!\!\!\!&&C\left[(1+\|w_2\|^{\alpha}_{\mathcal{V}})(1+\|w_2\|^{\beta}_{\mathcal{H}})\right]\|w_1-w_2\|^2_{\mathcal{H}},
\end{eqnarray*}
which implies that the local monotonicity condition holds.

For any $w=(u,v)\in\mathcal{V}$, there exist constants $C_{\vare}>0$ and $C>0$ such that
\begin{eqnarray*}
{_{\mathcal{V}^{*}}}\langle\tilde{A}(w), w\rangle_{\mathcal{V}}+\|G(w)\|^2_{L_{2}(\mathcal{U},\mathcal{H})}=\!\!\!\!\!\!\!\!&&{_{V^{*}_1}}\langle A(u),u\rangle_{V_1}
+\frac{1}{\varepsilon}{_{V^{*}_2}}\langle B(u,v),v\rangle_{V_2}\\
&&+\langle F(u,v),u\rangle_{H_1}+\|G_1(u)\|^2_{L_{2}(U_1, H_1)}+\frac{1}{\vare}\|G_2(u,v)\|^2_{L_{2}(U_2,H_2)}\\
\leq\!\!\!\!\!\!\!\!&&C\|u\|^2_{H_1}-\theta\|u\|^{\alpha}_{V_1}+C+\frac{1}{\vare}\left[C(1+\|u\|^2_{H_1}+\|v\|^2_{H_2})-\eta\|v\|^{\kappa}_{V_2}\right]
\\\leq\!\!\!\!\!\!\!\!&&C_{\vare}(1+\|w\|^2_{\mathcal{H}})-C_{\vare}(\|u\|^{\alpha}_{V_1}+\|v\|^{\kappa}_{V_2})
\end{eqnarray*}
and
\begin{eqnarray*}
\|\tilde{A}(w)\|_{\mathcal{V}^{*}}\leq\!\!\!\!\!\!\!\!&&\| A(u)\|_{V^{*}_1}+\frac{1}{\vare}\| B(u,v)\|_{V^{*}_2}+\| F(u,v)\|_{H_1}\\
\leq\!\!\!\!\!\!\!\!&&C(1+\|u\|^{\alpha-1}_{V_1})(1+\|u\|^{\frac{\beta(\alpha-1)}{\alpha}}_{H_1})+\frac{C}{\vare}(1+\|u\|_{H_1}+\|v\|_{H_2}+\|u\|^{\frac{2(\kappa-1)}{\kappa}}_{H_1})+\frac{C}{\vare}(1+\|v\|^{\kappa-1}_{V_2})
\\\leq\!\!\!\!\!\!\!\!&&C_{\vare}C(1+\|u\|^{\alpha-1}_{V_1}+\|v\|^{\kappa-1}_{V_2})(1+\|w\|^{\tilde{\beta}}_{\mathcal{H}}),
\end{eqnarray*}
for some $\tilde{\beta}> 0$, which implies that the coercivity and growth conditions hold.
\hspace{\fill}$\Box$

\vskip 0.2cm
\textbf{Acknowledge}.
This work is supported in part by NSFC (No. 11571147, 11601196, 11771187, 11822106, 11831014, 11931004), the NSF of Jiangsu Province
(No. BK20160004), the QingLan Project and the PAPD Project of Jiangsu Higher Education Institutions. Financial support of the DFG through CRC 1283 is gratefully acknowledged.

\vspace{0.3cm}

\end{document}